\renewcommand{\P}{{\rm P}}
\newcommand{\E}{{\rm E}}
\newcommand{\diag}{\mathrm{diag}}
\newcommand{\ud}{\, \mathrm{d}}  %  "d" for integrals
\newcommand{\ue}{e} 
\newcommand{\indic}{\mathbb 1} 
\newcommand{\C}{\mathbb C} 
\newcommand{\EE}{\mathcal{E}}
\newcommand{\FF}{\mathcal{F}}
\newcommand{\HH}{\mathcal{H}}
\newcommand{\MM}{\mathcal{M}}
\newcommand{\cals}{\mathcal{S}}
\newcommand{\s}{\mathcal{S}}
\renewcommand{\u}{{\mbox{\tiny $+$}}}
\renewcommand{\d}{{\mbox{\tiny $-$}}}
\renewcommand{\u}{{\mbox{\tiny $+$}}}
\renewcommand{\d}{{\mbox{\tiny $-$}}}
\newcommand{\vect}[1]{\boldsymbol #1}
\newcommand{\vc}{\vect}
\newcommand{\vh}{\vect h}
\newcommand{\vxi}{\vect \xi}
\newcommand{\valpha}{\vect \alpha}
\newcommand{\vnu}{\vect \nu}
\newcommand{\vmu}{\vect \mu}
\newcommand{\vsigma}{\vect \sigma}
\newcommand{\vone}{\vect 1}
\newcommand{\vzero}{\vect 0}
\newcommand{\vcc}{\vect c}
\newcommand{\vcd}{\vect d}
\newcommand{\vm}{\vect m}
\newcommand{\vr}{\vect r}
\newcommand{\vligne}[1]{\begin{bmatrix} #1 \end{bmatrix}}
\newcommand{\Pid}{P_b}%{D_{\tiny\Psi}}%{\Psi_b}
\newcommand{\Piu}{L_b}%{D_{\tiny \Lambda}}%{\Lambda_b}
\newcommand{\hPid}{\widehat P_b}
\newcommand{\hPiu}{\widehat L_b}
\newtheorem{defn}{Definition}[section]
\newtheorem{lem}[defn]{Lemma}
\newtheorem{theo}[defn]{Theorem}
\newtheorem{thm}[defn]{Theorem}
\newtheorem{cor}[defn]{Corollary}
\newtheorem{rem}[defn]{Remark}
\newtheorem{ass}[defn]{Assumption}
\newtheorem{exemple}[defn]{Example}
\newcommand{\qed}{\hfill $\square$}
\newcommand{\bs}{\boldsymbol}
\newenvironment{proof}{
      \noindent {\bf Proof }}{\qed
      \vspace{0.25\baselineskip}
}
\newenvironment{proofbis}[1]{
      \noindent {\bf Proof of #1}}{\qed
      \vspace{0.25\baselineskip}
}
\newcommand{\debproof}{\begin{proof}}
\newcommand{\finproof}{\end{proof}}
\definecolor{darkmagenta}{rgb}{0.5,0,0.5}
\definecolor{darkgreen}{rgb}{0,0.6,0}
\definecolor{darkblue}{rgb}{0,0,0.6}
\definecolor{darkred}{rgb}{0.8,0,0}
\definecolor{mellow}{rgb}{.847, 0.72, 0.525}
\begin{document}

\title{Feedback control: two-sided Markov-modulated Brownian motion with instantaneous change of phase at boundaries}
\author{Guy Latouche\thanks{Universit\'e Libre de Bruxelles,
D\'epartement d'informatique, CP 212, Boulevard du Triomphe, 1050 Bruxelles, Belgium, \texttt{latouche@ulb.ac.be}.} 
\and
Giang T. Nguyen\footnote{The University of Adelaide, School of Mathematical Sciences, SA 5005, Australia, \texttt{giang.nguyen@adelaide.edu.au}}
}
\date{\today}
\maketitle

\begin{abstract}
  We consider a Markov-modulated Brownian motion $\{Y(t), \rho(t)\}$
  with two boundaries at $0$ and $b > 0$, and allow for the
  controlling Markov chain $\{\rho(t)\}$ to instantaneously undergo a
  change of phase upon hitting either of the two boundaries at
  \emph{semi-regenerative epochs} defined to be the first time the process
  reaches a boundary since it last hits the other boundary. We call
  this process a \emph{flexible Markov-modulated Brownian motion}.

Using the recently-established links between stochastic fluid models
and Markov-modulated Brownian motions, we determine important
characteristics of first exit times of a Markov-modulated Brownian
motion from an interval with a regulated boundary.  These results allow us to
follow a Markov-regenerative approach and obtain the stationary
distribution of the flexible process.  This highlights the effectiveness of the regenerative approach in analyzing Markov-modulated Brownian motions subject to more general boundary behaviours than the classic regulated boundaries.\\

\noindent \underline{Keywords}: Fluid queues, Markov-modulated Brownian motion, regenerative processes, finite buffer, stationary distribution, feedback.
\end{abstract}

\section{Introduction}
	\label{s:introduction}
	
We analyze  Markov-modulated Brownian
motions (MMBMs) restricted to the interval $[0, b]$, the
distinguishing feature being that the processes are allowed to
undergo an instantaneous change of phase upon hitting either
boundaries at \emph{semi-regenerative epochs}. These
correspond to the first times the
process reaches a boundary after it hits the other boundary. We
refer to these processes as \emph{flexible  Markov-modulated
  Brownian motion}. 

The simplest example we have in mind is described as follows: consider
a buffer of finite capacity $b$ serving as temporary storage for data
in a communication network.  Assume that its content $X(t)$ evolves in
time like a Brownian motion with parameters $\mu<0$ and $\sigma^2 >
0$.  Whenever the buffer gets full, data may be lost.  To reduce
such losses, additional bandwidth is allocated, or the input stream is
throttled, or other measures are taken, such that the mean drift becomes $\mu' < \mu$.  Once the
buffer is emptied, the process returns to its normal mode of
operation, until it gets full again, etc.   See Figure~\ref{f:bmpath}
for two sample trajectories.  The graph at the top depicts a
regulated Brownian motion without change of parameters, the one at
bottom depicts a flexible version of the process; we have marked
with a thick line the interval of time $(0.46, 0.62)$ during which the drift is
$\mu'$.  

%============================
\begin{figure}[tbh]
\centering{
\includegraphics[width=0.8\textwidth,height=0.10\textheight]{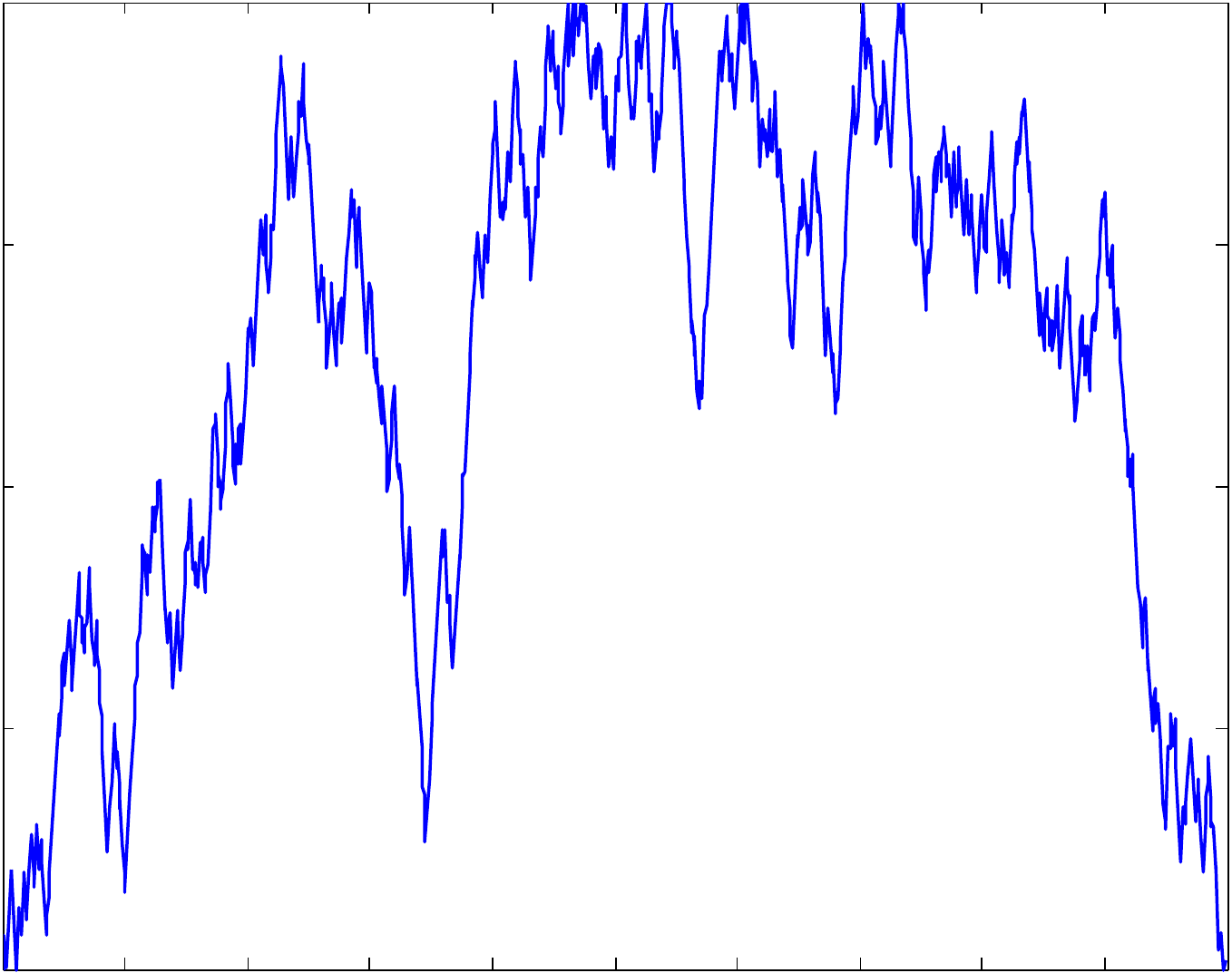} 
\put(0,-10){\makebox(0,0)[c]{$1$}}
\put(-155,-10){\makebox(0,0)[c]{$0.5$}}
\put(-310,-10){\makebox(0,0)[c]{$0$}}
\put(-320,0){\makebox(0,0)[c]{$0$}}
\put(-320,60){\makebox(0,0)[c]{$b$}}

\vspace{1\baselineskip}
  \includegraphics[width=0.8\textwidth,height=0.10\textheight]{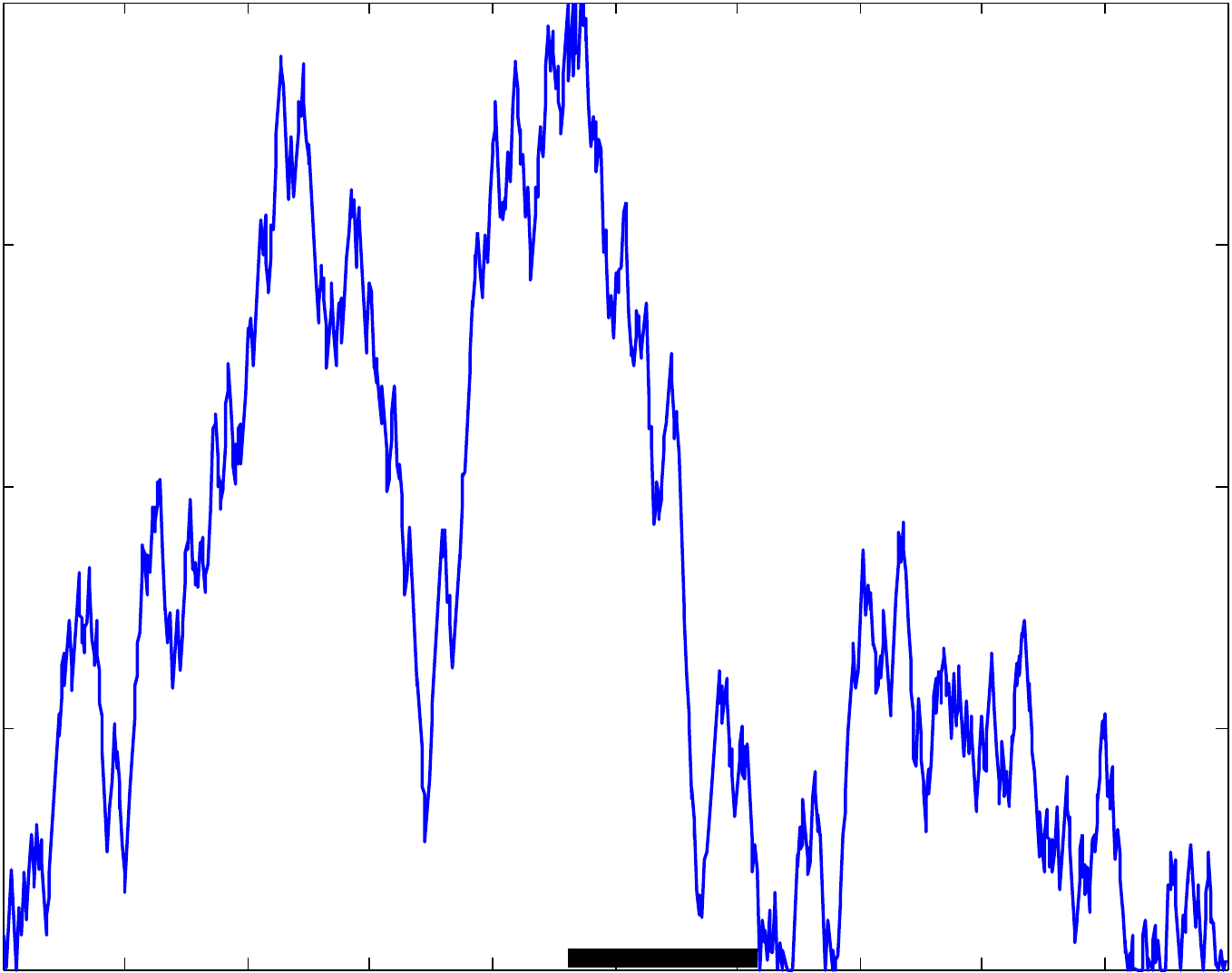} 
}
\put(0,-10){\makebox(0,0)[c]{$1$}}
\put(-155,-10){\makebox(0,0)[c]{$0.5$}}
\put(-310,-10){\makebox(0,0)[c]{$0$}}
\put(-320,0){\makebox(0,0)[c]{$0$}}
\put(-320,60){\makebox(0,0)[c]{$b$}}

\caption{\label{f:bmpath}
Sample trajectory of a regulated Brownian motion (top) and of a
flexible Brownian motion (bottom).  The parameters are $b=4$,
$\mu=-1$, $\mu'=-10$, $\sigma^2=10$.}
\end{figure}
%============================

% \vspace{3\baselineskip}

% %============================
% \begin{figure}[tbh]
% \centering{
% \includegraphics[width=0.8\textwidth,height=0.10\textheight]{\Home Figures/Gbase-crop.pdf} 
% \put(0,-10){\makebox(0,0)[c]{$1$}}
% \put(-155,-10){\makebox(0,0)[c]{$0.5$}}
% \put(-310,-10){\makebox(0,0)[c]{$0$}}
% \put(-320,0){\makebox(0,0)[c]{$0$}}
% \put(-320,60){\makebox(0,0)[c]{$b$}}

% \vspace{1\baselineskip}
% \includegraphics[width=0.8\textwidth,height=0.10\textheight]{\Home
%   Figures/Gflex-crop.pdf} 
% }
% \put(0,-10){\makebox(0,0)[c]{$1$}}
% \put(-155,-10){\makebox(0,0)[c]{$0.5$}}
% \put(-310,-10){\makebox(0,0)[c]{$0$}}
% \put(-320,0){\makebox(0,0)[c]{$0$}}
% \put(-320,60){\makebox(0,0)[c]{$b$}}

% \caption{\label{f:bmpath}
% Sample trajectory of a regulated Brownian motion (top) and of a
% flexible Brownian motion (bottom).  The parameters are $b=4$,
% $\mu=-0.2$, $\mu'=-10$, $\sigma^2=10$.}
% \end{figure}
% %============================

In the general formulation, the evolution of the buffer is controlled
by a continuous-time Markov chain called the process of phases.
Whenever the buffer reaches a boundary for the first time after it has
visited the other boundary, the phase is allowed to undergo an
instantaneous change.  It is clear that the epochs when change may
occur at a boundary form a semi-regenerative set of points.

The stationary distribution of a Markov-modulated Brownian motion
restricted to a strip $[0,b]$ is well-analyzed, as long as the
boundaries are absorbing or regulated as in Ivanovs~\cite{ivano10}.  Here, we determine the stationary
distribution of our flexible MMBMs by following a Markov-regenerative
approach.  The usefulness of this approach has been repeatedly
demonstrated for fluid queues (aka first-order fluid processes) as in
da Silva Soares and Latouche~\cite{dssl05,dssl07}, Latouche and
Taylor~\cite{lt09}, Bean and O'Reilly~\cite{bo08}.  It has been adapted
in Latouche and Nguyen~\cite{ln15} to MMBMs with one {\em reactive}
boundarys, by which we mean any boundary that is not absorbing or
regulated.  As we shall demonstrate later, the effectiveness of our method
extends far beyond the model analyzed here.

To determine the stationary distribution, the key ingredients 
needed are the expected time spent in an interval $[0,x]$ and in a
given phase during an excursion from 0 to $b$ for the process
regulated at 0, and from $b$ to 0 for the process regulated at $b$,
as well as the distribution of the phase upon reaching a boundary.
To obtain these quantities, we rely on the connections between MMBMs
and their approximating Markov-modulated fluid models, exemplified in
Latouche and Nguyen~\cite{ln14,ln13,ln15}.

Our results are related to those in Bean {\it et al.}~\cite{bot09}:
the authors analyze sojourn times in specified intervals during
various excursions for fluid queues with reactive boundaries.  The
main differences are that we consider jointly the level and the phase,
and that we deal with Markov modulated Brownian motion.  We should
also mention the results in Breuer~\cite{breue12} about occupation
times before a two-sided exit; we discuss later in some detail the
connection with our results.

The paper is organized as follows.  We give in the next section the
technical definition of flexible Markov-modulated Brownian motions and
we outline the Markov-regenerative approach for obtaining their
stationary distribution.  Section~\ref{s:fluid} includes background
material and notation required for the paper.
We determine in Sections~\ref{s:proba}
and~\ref{s:fluidtime} the first passage probabilities from one
boundary to the other, and the expected time spent during these
excursions.  In Section~\ref{s:sd}, we bring all partial results
together and determine the stationary distribution of a
flexible MMBM.  We give three numerical examples in
Section~\ref{s:illustration}, we compare in Section~\ref{s:lothar} our
results to the existing literature, and we conclude in
Section~\ref{s:conclusion} with a discussion on the applicability of
our approach to more complex models.

\section{Flexible Markov-modulated Brownian motion}
	\label{s:bm}
	
	% \subsection{The model} 
	% 	\label{subsec:model}

	A \emph{free-boundary} Markov-modulated Brownian motion is a
        two-dimensional Markov process $\{X(t), \kappa(t)\}_{t \geq
          0}$ such that
\begin{align*} 
		X(t) = X(0) + \int_0^t \mu_{\kappa(s)} \ud s + \int_0^t \sigma_{\kappa(s)}\ud B(s),  
\end{align*}
where $\{B(t)\}_{t \geq 0}$ is a standard Brownian motion and
$\{\kappa(t)\}$ is a continuous-time Markov chain on the state space
$\EE = \{1, \ldots, m\}$, with generator $Q$.  We denote by
$\Delta_\mu = \diag(\mu_i)_{i \in \EE}$ and $\Delta_\sigma^2 = \diag(\sigma_i^2)_{i \in
  \EE}$, respectively, the drift and variance matrices of the
MMBM.   We assume that $\sigma_i >0$ for all $i$ in order to
significantly simplify the presentation. 

A Markov-modulated Brownian motion $\{Z(t), \kappa(t)\}_{t \geq
  0}$ with two \emph{regulated} boundaries at $0$ and at $b > 0$ is
defined as
\begin{align*} 
			Z(t) = X(t) + R_0(t) - R_b(t),
\end{align*} 
where $\{R_0(t)\}_{t \geq 0}$ and $\{R_b(t)\}_{t \geq 0}$ are nonnegative,
continuous and almost surely nondecreasing processes; $R_0(t)$ increases only
when $Z(t)$ is at zero and $R_b(t)$ only when $Z(t)$ is at $b$. The
regulators $\{R_0(t)\}_{t \geq 0}$ and $\{R_b(t)\}_{t \geq 0}$ are the minimal processes keeping $Z(t)$ in $[0,b]$.

Next, we allow the phase to change as a reaction to $Z(t)$ reaching
either level $b$ or level 0.  The general idea is that the flexible
MMBM evolves like $Z(t)$ during a regeneration interval $(\theta,
\theta')$, but at time $\theta'$ the phase immediately switches to a new
value according to the transition probability matrix~$P^{\bullet}$, in
case $Z(\theta') = b$, and according to another probability matrix
$P^{\circ}$ in case $Z(\theta')=0$.  The technical details follow.

Our pathwise construction of a flexible MMBM $\{Y(t), \rho(t)\}_{t
  \geq 0}$ starts with a countably infinite supply of independent
copies of regulated MMBMs $\{Z_{n}(k;t), \kappa_{n}(k;t)\}$, such that
$\kappa_{n}(k;0) = k$, for $n \geq 0$ and $k \in \EE$.  We assume that
$Z_n(k;0)=0$ if $n$ is even, and $Z_n(k;0)=b$ for odd $n$.  All
processes have the same parameters: lower and upper bounds 0 and $b$,
generator $Q$, drift and variance matrices~$\Delta_\mu$ and
$\Delta_\sigma^2$.  Most of these processes will not be used, 
but they allow us to maintain independence where our construction
requires it.
% and this might be considered to be a bit of a waste of good MMBMs, but
% they are dirt cheap, and this allows us to make sure that no insidious
% lack of independence creeps in our construction.
	
Without loss of generality, assume that $Y(0)=0$ and $\rho(0)=i \in
\EE$, and let $\theta_0=0$.  We define
\[
Y(t) = Z_0(i;t), \qquad \rho(t)= \kappa_0(i;t) \qquad \qquad 
\mbox{ for $0 \leq t < \theta_1$,}
\]
where $\theta_1 = \inf \{t > 0: Z_{0}(i;t) = b\}$ is the first hitting
time to level $b$.   Upon hitting level $b$, the phase $\rho$
instantaneously changes to some value $j$ according to the transition
matrix $P^{\bullet}$, and we define
\[
Y(t)= Z_1(j;t-\theta_1), \qquad \rho(t)= \kappa_1(j;t-\theta_1)
\qquad  \mbox{ for  $\theta_1 \leq t < \theta_1+h_2$,}
\]
where  $h_2 = \inf \{ t > 0:
Z_1(j;t) = 0 \}$.  Upon hitting level 0, $\rho$ changes to a new
value according to $P^\circ$, and so on.

In general, starting with $\theta_0 =0$ and $\rho(0)=i$, we recursively define for $n \geq 0$ the following: 
\begin{itemize}
\item[]
$i_n = \rho(\theta_n)$,
\item[]
$h_{n+1} = \inf\{ t > 0: Z_n(i_n;t)= b \, \indic\{\mbox{$n$ is even}\}\}$,
%\even(n)\}$, 
% where the function
% $\even(n)$ is equal to  1 if $n$ is even, equal to 0 otherwise,
\item[]
$\theta_{n+1} = \theta_n + h_{n+1}$, 
\item[]
$\rho(\theta_{n+1})$ is obtained from the row $\kappa_n(i_n; h_{n+1})$
of the matrix $P^\bullet$ if $n$ is even, and of $P^\circ$ if $n$ is odd,
\end{itemize}
and
\begin{equation}
   \label{e:yoft}
Y(t)= Z_N(\rho(\theta_N);t-S), \qquad \rho(t)=
\kappa_N(\rho(\theta_N);t-S), 
\end{equation}
where $N= \mathrm{arg\,max}\{s: \theta_s \leq t\}$ and $S=\theta_N$.
%
% $S = \max \{\theta_s: \}$ and $N$ is the index
% such that $\theta_N = S$.  
%
As $\lim_{n
\rightarrow \infty}\theta_n = \infty$, this defines $\{Y(t), \rho(t)\}$ over the whole
interval $[0, \infty)$.

By construction, 
\begin{align*} 
		%	\theta_0 & = 0, \\
			\theta_{2n + 1} & = \inf \{t > \theta_{2n}:Y(t) = b\} \quad \mbox{ for } n \geq 0,  \\
			\theta_{2n} & = \inf \{t > \theta_{2n - 1}: Y(t) = 0\} \quad \mbox{ for } n \geq 1,
		\end{align*} 
and $\{\theta_n\}_{n \geq 0}$ forms a set of semi-regenerative points
for $\{Y(t), \rho(t)\}$ on the state space $\{0, b\} \times \EE$.
Because of the discontinuity introduced at the regeneration points, we
shall need to define the two limits, from the left and from the right,
\begin{align*} 
\rho(\theta_n^{-}) = \lim\limits_{t \uparrow \theta_n}\rho(t), 
\qquad
\rho(\theta_n^{+}) = \lim\limits_{t \downarrow \theta_n}\rho(t);
\end{align*} 
the trajectories of $\{\rho(t)\}$ are right-continuous, and so
$\rho(\theta_n) =	\rho(\theta_n^{+})$ for all~$n$.

The semi-Markov kernel $D(\cdot)$ for the transitions between
semi-regenerative points is defined as 
\begin{align}
   \label{e:Dij}
D_{(x,i), (y,j)}(t) &  = \P[\theta_{n + 1} - \theta_n \leq t,
\rho(\theta_{n + 1}^{-}) = j, Y(\theta_{n + 1}) = y \; | \\
   \nonumber
			& \hspace*{6cm} \; Y(\theta_n) = x, \rho(\theta_n^{-}) = i], 
\end{align} 	
where $x, y \in \{0, b\}$ and $i, j \in \EE$.  By
construction, again, the structure of $D$ is
\begin{align}
   \label{e:doft}
		D(t) = \left[\begin{array}{cc} 0 & D_{0}(t) \\ 
		            D_b(t) & 0    \end{array} \right],  
\end{align} 
where $D_x(\cdot)$ records the transition probabilities from
$\rho(\theta_n^-)$ to $\rho(\theta_{n + 1}^-)$ given that $Y(\theta_n) =
x$.

We need to make some irreducibility assumption at this point, for the
arguments that follow to hold.  In view of our elementary introductory
example, we should not automatically assume, as is usually done, that
the generator $Q$ is irreducible: in that example, one may consider
that there is one phase and two distinct sets of parameters, one for
the even-numbered and one for the odd-numbered intervals.  One may
also view that there are two phases, one with mean drift $\mu$ and one
with mean drift $\mu'$, and that there is no connection between the
two, except through the boundary feedback mechanism.  For the time
being, we make the following assumption only and we give in
Section~\ref{s:sd} simple conditions for it to hold.
\begin{ass}
   \label{a:irreducible}
The transition matrix $D$ is irreducible.  In other words, for any
pair of states $(x,i)$ and $(y,j)$, there is a path of positive probability
from $(Y(0)=x, \rho(0)=i)$ to $(Y(\theta_n)= y, \rho(\theta_n)=j)$, for
some $n$.
\end{ass}

Next, we define the matrix $\Theta(x)$ of
expected sojourn times: for $y =0$ or $b$, $i$ and $j$ in $\EE$, and $x\geq
0$, the component $\Theta_ {y;i,j}(x)$ is the expected time spent by
the process in the set $[0,x] \times \{j\}$ during a regeneration
interval $[\theta_n, \theta_{n+1})$, conditionally given that
$Y(\theta_n)=y$ and $\rho(\theta_n^-) =i$.  We display that matrix as
\begin{equation*}
%   \label{e:thetaofx} 
\Theta(x) = \left[\begin{array}{c} 
		\Theta_{0}(x)  \\ 
		\Theta_{b}(x)
	\end{array} \right]. 
\end{equation*} 
By \c{C}inlar \cite[Sect.10.4, Prop.4.9]{cinla75}, the joint stationary distribution $\Pi(x)$ of $\{Y(t), \rho(t)\}$ is given by 
	\begin{align} 
		\label{eqn:Pix} 
	\Pi(x) = (\vcd \, \bs{\theta})^{-1} \vcd \, \Theta(x),
	\end{align} 
        where $\bs{\theta} = \Theta(\infty)\bs{1}$, with $\bs{1}$ a
        column vector of 1s, is the vector of expected length of a
        regenerative interval, given the initial state, and $\vcd$ is
        the stationary distribution of the phase immediately before
        the end of the next interval, that is, $\vcd D(\infty) =
        \vcd$, and $\vcd\bs{1} = 1$.

In Theorem~\ref{t:piofx} below, we express $\Pi(x)$ directly in terms
of properties of the regulated Brownian motion $\{Z(t), \kappa(t)\}$.
For that purpose, we introduce the transition probability matrices
\begin{align}
   \label{e:H0}
(H_0)_{ij} & = \P[\delta_b < \infty, \kappa(\delta_b) = j | Z(0)=0,
\kappa(0)=i], \\
   \label{e:Hb}
(H_b)_{ij} & = \P[\delta_0 < \infty, \kappa(\delta_0) = j | Z(0)=b,
\kappa(0)=i], 
\end{align}
where  $\delta_x = \inf\{ t >  0: Z(t)=x\}$  is the first passage time to level $x$, 
and we define the matrices of expected sojourn times  in
the interval $[0,x]$ during an excursion from level 0
to level $b$, and from level $b$ to level 0:
\begin{align}
   \label{e:M0}
(M_0(x))_{ij} &= \E[\int_0^{\delta_b} \indic\{Z(s) \in [0,x], \kappa(s)=j\}
\, \ud s  \, | Z(0)=0, \kappa(0)=i],  \\
   \label{e:Mb}
(M_b(x))_{ij} &= \E[\int_0^{\delta_0} \indic\{Z(s) \in [0,x], \kappa(s)=j\}
\, \ud s  \, | Z(0)=b, \kappa(0)=i].
\end{align}

\begin{thm}
   \label{t:piofx}
The stationary distribution $\Pi(x)$ of the flexible Markov-modulated
Brownian motion $\{Y(t),
\rho(t)\}$ is given by 
$
\Pi(x) = (\vnu \,   \vm)^{-1} \vnu M(x),
$
where  $\vnu = \vligne{\vnu_0 & \vnu_b}$ with 
\[
\vnu_0 = \vnu_0   H_0 P^\bullet H_b P^\circ  \qquad \vnu_b = \vnu_0
H_0 P^\bullet,
\]
$\vnu_0$ being unique up to a multiplicative constant, 
\[
M(x)   =\vligne{M_0(x) \\  M_b(x)},
\]
and $\vm = M(b) \vone$.
\end{thm}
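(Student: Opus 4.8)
The plan is to evaluate the \c{C}inlar formula \eqref{eqn:Pix} by identifying each of its ingredients --- the stationary vector $\vcd$ of the embedded chain, the expected-sojourn matrix $\Theta(x)$, and the mean interval length $\bs\theta$ --- with quantities of the regulated MMBM $\{Z(t),\kappa(t)\}$, namely $H_0$, $H_b$, $M_0(x)$ and $M_b(x)$.

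First I would read the one-step data off the pathwise construction. Fix a regeneration interval $[\theta_n,\theta_{n+1})$ with $n\geq1$ and $Y(\theta_n)=0$ (so $n$ is even). By construction the phase jumps instantaneously from $\rho(\theta_n^-)$ to $\rho(\theta_n)$ along the corresponding row of $P^\circ$, and then $\{Y,\rho\}$ evolves on $[\theta_n,\theta_{n+1})$ as the independent copy $Z_n$ started from level $0$ and phase $\rho(\theta_n)$, run up to its first passage to $b$ --- before which the regulation of $Z_n$ at $b$ is inactive, so this matches the process underlying $H_0$ and $M_0$. Since $\sigma_i>0$ for every $i$, that first passage is almost surely finite, so, conditionally on $\rho(\theta_n)=k$, the phase $\rho(\theta_{n+1}^-)$ has law given by row $k$ of $H_0$ and the mean occupation of $[0,x]\times\{j\}$ over the interval equals $(M_0(x))_{kj}$. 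Averaging over the $P^\circ$ jump and using independence of the copies gives $D_0(\infty)=P^\circ H_0$ and $\Theta_0(x)=P^\circ M_0(x)$; the symmetric computation at $Y(\theta_n)=b$ gives $D_b(\infty)=P^\bullet H_b$ and $\Theta_b(x)=P^\bullet M_b(x)$. Hence
\[
D(\infty)=\vligne{0 & P^\circ H_0 \\ P^\bullet H_b & 0}, \qquad
\Theta(x)=\vligne{P^\circ M_0(x)\\ P^\bullet M_b(x)} ,
\]
and, as a copy stays in $[0,b]$ until its first passage, $M_0(\infty)=M_0(b)$ and $M_b(\infty)=M_b(b)$, so $\Theta(\infty)=\Theta(b)$ and $\bs\theta=\Theta(b)\vone$.

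Next I would turn to the linear algebra. Writing $\vcd=\vligne{\vcd_0 & \vcd_b}$, the equation $\vcd D(\infty)=\vcd$ gives $\vcd_0=\vcd_b P^\bullet H_b$ and $\vcd_b=\vcd_0 P^\circ H_0$, whence $\vcd_0=\vcd_0 P^\circ H_0 P^\bullet H_b$. Set $\vnu_0:=\vcd_0 P^\circ$ and $\vnu_b:=\vcd_b P^\bullet=\vnu_0 H_0 P^\bullet$; right-multiplying $\vcd_0=\vcd_0 P^\circ H_0 P^\bullet H_b$ by $P^\circ$ gives $\vnu_0=\vnu_0 H_0 P^\bullet H_b P^\circ$, the fixed-point equation in the statement. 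Then $\vcd\,\Theta(x)=\vcd_0 P^\circ M_0(x)+\vcd_b P^\bullet M_b(x)=\vnu_0 M_0(x)+\vnu_b M_b(x)=\vnu M(x)$, and taking $x=b$ gives $\vcd\,\bs\theta=\vcd\,\Theta(b)\vone=\vnu M(b)\vone=\vnu\vm$; substituting into \eqref{eqn:Pix} yields $\Pi(x)=(\vnu\vm)^{-1}\vnu M(x)$. For uniqueness of $\vnu_0$ up to a constant: under Assumption~\ref{a:irreducible}, $D(\infty)$ is irreducible, and it is stochastic because first passages are a.s.\ finite and $P^\bullet,P^\circ$ are stochastic; being of bipartite block form, $D(\infty)^2$ is block diagonal and irreducibility of $D(\infty)$ forces the block $P^\circ H_0 P^\bullet H_b$ to be irreducible, so it has $1$ as a simple eigenvalue by Perron--Frobenius; since $XY$ and $YX$ share the same characteristic polynomial (take $X=H_0 P^\bullet H_b$, $Y=P^\circ$), $1$ is a simple eigenvalue of $H_0 P^\bullet H_b P^\circ$ as well, and $\vnu_0=\vcd_0 P^\circ$ spans its left $1$-eigenspace.

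The main obstacle is the first step: one must check carefully that on a generic regeneration interval of the flexible process the \emph{joint} level-and-phase occupation measure and the terminal phase coincide in law with those of a freshly started regulated MMBM --- this is precisely what the countable family of independent copies $\{Z_n(k;\cdot)\}$ is designed to ensure --- and also that the atypical first interval $[\theta_0,\theta_1)$, which carries no jump at $\theta_0$, does not affect the stationary distribution (it is visited only finitely often). A smaller point worth noting, given the paper's emphasis, is that the whole argument uses only Assumption~\ref{a:irreducible} on $D$ and never irreducibility of $Q$.
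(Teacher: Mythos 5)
Your proposal is correct and follows essentially the same route as the paper: identify $D(\infty)$ and $\Theta(x)$ as $\mathrm{diag}(P^\circ,P^\bullet)$ times the regulated-MMBM quantities $H$ and $M$, substitute into the \c{C}inlar formula, and change variables via $\vnu_0=\vcd_0 P^\circ$, $\vnu_b=\vcd_b P^\bullet$. The only additions are elaborations the paper leaves implicit (stochasticity of $D(\infty)$, the transfer of simplicity of the eigenvalue $1$ from $P^\circ H_0 P^\bullet H_b$ to $H_0 P^\bullet H_b P^\circ$), which are welcome but do not change the argument.
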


\begin{proof}
By our definition (\ref{e:Dij}) of the semi-Markov kernel, starting
from one of the two boundaries at
time $\theta_n$, a new phase is chosen
with the corresponding matrix $P^\circ$ or $P^\bullet$, and then a
process stochastically identical to $\{Z(t)\}$ evolves until it
reaches the other boundary.  Thus,  $D_0(\infty) = P^\circ H_0$ and
$D_b(\infty) = P^\bullet H_b$ or, in matrix form,
\begin{equation}
   \label{e:dinfty}
D(\infty) = \vligne{P^\circ & 0 \\ 0 & P^\bullet}
           \vligne{0 & H_0 \\ H_b  & 0}.
\end{equation}
The stationary probability vector $\vcd$ of $D(\infty)$, written as
$\vcd = \vligne{\vcd_0 & \vcd_b}$,  satisfies the equations
\[
\vcd_0 = \vcd_b P^\bullet H_b, \qquad \vcd_b = \vcd_0 P^\circ H_0,
\]
or
\begin{equation}
   \label{e:dzerodb}
\vcd_0 = \vcd_0 P^\circ H_0 P^\bullet H_b, \qquad \vcd_b = \vcd_0
P^\circ H_0.
\end{equation}
The matrix $P^\circ H_0 P^\bullet H_b$ is the transition probability
matrix from a phase immediately before a regeneration at level 0 to
the phase immediately before the next regeneration at 0; it is
irreducible by Assumption~\ref{a:irreducible} and so the vector
$\vcd_0$ is unique, up to a multiplicative constant.

By the same argument that leads to (\ref{e:dinfty}), we conclude that 
\begin{equation*}
 %  \label{e:thetaofx}
\vligne{\Theta_0(x) \\ \Theta_b(x)} = \vligne{P^\circ & 0 \\ 0 & P^\bullet}
           \vligne{ M_0(x) \\ M_b(x)},
\end{equation*}
and so, by (\ref{eqn:Pix}),
\[
\Pi(x) = c
           \vligne{\vcd_0 & \vcd_b} 
           \vligne{P^\circ & 0 \\ 0 & P^\bullet}
           \vligne{ M_0(x) \\ M_b(x)}
\]
for some normalizing constant $c$.
The remainder of the proof is immediate once we define $\vnu_0 =
\vcd_0 P^\circ$ and $\vnu_b = \vcd_b \P^\bullet$.
\end{proof}

In consequence of Theorem \ref{t:piofx}, we need only to take into
consideration a simple MMBM with two regulated boundaries, and to
focus on one excursion from a boundary to the other.  This we do in
Sections~\ref{s:proba} and~\ref{s:fluidtime}.  Before that, we recall
some basic properties of MMBMs.

%===========================

\section{Background material and notation}
\label{s:fluid}

We analyze in Sections~\ref{s:proba} and~\ref{s:fluidtime} a regulated
process controlled by
a phase process with an irreducible generator $Q$ and a unique set of
parameters $\vmu$ and $\vsigma$.  Unlike the process defined in
Section~\ref{s:bm}, there is no reaction of the phase when the buffer
reaches a boundary and, to ensure that there is no confusion
with the matrices $H$ and $M$ defined in (\ref{e:H0}--\ref{e:Mb}), we use the
symbols $\HH$ and $\MM$.

Two matrices play an important role in the analysis of MMBMs.  They
are denoted as $U$ and $\widehat U$ in the present paper, and  $U$
and $-\widehat U$ are solutions of the matrix equation
\begin{equation}
   \label{e:U}
\Delta_\sigma^2 X^2 + 2 \Delta_\mu X + 2 Q =0,
\end{equation}
with $U$ being the minimal solution and $-\widehat U$ being the maximal
solution, meaning that the
eigenvalues of $U$ are the roots of the polynomial $\det(
\Delta_\sigma^2 z^2 + 2 \Delta_\mu z + 2 Q)$ in the negative
half complex plane and the eigenvalues of $-\widehat U$ are the roots
in the positive half-plane  (see D'Auria {\it et al.}~\cite[Section 4]{aikm12} and
Latouche and Nguyen~\cite[Lemma 5.3]{ln14}).

Both matrices are irreducible generators: $\widehat U$ is the generator of
the Markov chain $\{\kappa(\delta_x)\}_{x \geq 0}$ and $U$ is the
generator of the Markov chain $\{\kappa(\delta_{-x})\}_{x \geq 0}$.
One  recognizes three
different cases, based on the sign of the mean drift $\valpha
\vmu$, where $\valpha$ is the stationary distribution of
$Q$ ($\valpha Q = \vzero$, $\valpha \vone = 1$):
\begin{enumerate}
\item 
If $\valpha \vmu > 0$, then $\widehat U \vone = \vzero$ and $U
\vone \leq \vzero$,  with at least one strict inequality, $U$ is nonsingular.
\item 
If $\valpha \vmu < 0$, $U \vone = \vzero$ and $\widehat U
\vone \leq \vzero$,  with at least one strict inequality, $\widehat U$ is nonsingular.
\item 
If $\valpha \vmu = 0$, both $U \vone$ and $\widehat U \vone$
are equal to $\vzero$.
\end{enumerate}

We prove in~\cite{ln14, ln13}  that
Markov-modulated Brownian motions can be approximated by a family of
fast oscillating free-boundary fluid processes $\{X^{\lambda}(t),
\beta^{\lambda}(t), \kappa^{\lambda}(t)\}_{t \geq 0}$ parameterized by
$\lambda > 0$, where $\{ \beta^{\lambda}(t), \kappa^{\lambda}(t)\}$ is
a two-dimensional Markov process on the state space $\mathcal{S} =
\{1,2\} \times \EE$, with generator
\begin{align}
   \label{e:Tlam}
		T^{\lambda} = \left[\begin{array}{cc} Q - \lambda I & \lambda I \\ 
		\lambda I & Q - \lambda I \end{array} \right], 
\end{align} 
	and the level process $\{X^{\lambda}(t)\}$ is driven by the phase $\{\beta^{\lambda}(t), \kappa^{\lambda}(t)\}$ as follows
\begin{align*}
		X^{\lambda}(t) = \int_0^t C^{\lambda}_{\beta^{\lambda}(u), \kappa^{\lambda}(u)} \ud u,
\end{align*} 
with 
\begin{align}
   \label{e:Clam} 
C^{\lambda} = \left[\begin{array}{cc}
	\Delta_\mu + \sqrt{\lambda} \Delta_\sigma & \\
	& \Delta_\mu - \sqrt{\lambda} \Delta_\sigma
\end{array} \right].
\end{align}

Consider the family of regulated processes $\{Z^{\lambda}(t),
\beta^{\lambda}(t), \kappa^{\lambda}(t)\}$ with boundaries at $0$ and
at $b > 0$, and initial phase $\beta^\lambda(0)$ equal to 1 or 2
with equal probability 0.5, for all $\lambda$.
By \cite[Thm.3.1]{ln14}, the regulated MMBM $\{Z(t),
\kappa(t)\}$ defined in Section~\ref{s:bm} is the weak limit
of the projected process $\{Z^{\lambda}(t), \kappa^{\lambda}(t)\}$,
and the stationary distribution of the former arises as the limit of
that of the latter as $\lambda \rightarrow \infty$. 
In consequence, the sojourn time matrix $\MM(x)$ and the first passage
probability matrix $\HH$ are the limits of the corresponding matrices
for the projected process as $\lambda \rightarrow \infty$. 

We partition the state space into the subsets $\mathcal{S}_\u$ and 
$\mathcal{S}_\d$, where $\mathcal{S}_\u = \{(i,j) \in \mathcal{S}:
c_{ij} > 0\}$ and $\mathcal{S}_\d = \{(i,j) \in \mathcal{S}: c_{ij} <
0\}$. For sufficiently large~$\lambda$, $\mathcal{S}_\u = \{(1, j): j
\in \EE\}$ and $\mathcal{S}_\d = \{(2,j): j \in
\EE\}$.   Several matrices are partitioned in a conformant
manner.  For instance, we write $T^\lambda$ as
\[
T^\lambda = \vligne{
T^\lambda_{\u\u} & T^\lambda_{\u\d}
  \\
T^\lambda_{\d\u} & T^\lambda_{\d\d}}.
\]
Two first passage probabilities are needed in the next section.  One is $\Psi^\lambda_b$,  indexed by $\cals_\u \times \cals_\d$, which records the probability that, starting from level~0 in a state of $\cals_\u$, $Z^\lambda$ returns to level~0 before reaching
level $b$; the other matrix, $\Lambda^\lambda_b$ is indexed by $\s_\u
\times \s_\u$ and  records the probability that level $b$ is
reached before a return to level~0:  
\begin{align}
   \label{e:psib}
(\Psi^\lambda_b)_{(1,i)(2,j)} & = \P[\delta^\lambda_0 <
\delta^\lambda_b, \kappa^\lambda(\delta^\lambda_0)=j  \, | \, 
Z^\lambda(0)=0, \beta^\lambda(0)=1, \kappa^\lambda(0)=i],  \\
  \nonumber
(\Lambda^\lambda_b)_{(1,i)(2,j)} & = \P[\delta^\lambda_b <
\delta^\lambda_0, \kappa^\lambda(\delta^\lambda_b)=j  \, | \, 
Z^\lambda(0)=0, \beta^\lambda(0)=1, \kappa^\lambda(0)=i].
\end{align}

%=========================

\section{Transition probability matrices}
\label{s:proba}

Starting from any state in $\s$ at time 0, the process $\{ Z^\lambda(t),
\beta^\lambda(t), \kappa^\lambda(t)\}$ is necessarily in a state of $\s_\u$
at time $\delta^\lambda_b$, and the matrix $H^\lambda_0$ of first
passage probability from level 0 to level $b$ has the structure
\[
H^\lambda_0 = \vligne{H^\lambda_{\u\u}  \\  H^\lambda_{\d\u}}.
\]
(We omit the subscript 0 for the sub-matrices in the following
calculations as there is no ambiguity.)  Since
$\beta^\lambda(0)$ is equal to 1 or 2 with equal probabilities, we
have 
\begin{equation}
   \label{e:Hlam1}
H_0 = \lim_{\lambda \rightarrow \infty} (0.5 H^\lambda_{\u\u} + 0.5
H^\lambda_{\d\u}). 
\end{equation}
Now, starting in a phase of $\s_\d$, the fluid remains at level 0
until it first moves to a phase of $\s_\u$, with the transition
probability matrix
\[
(-T^\lambda_{\d\d})^{-1} T^\lambda_{\d\u} = (\lambda I - Q)^{-1}
\lambda I = (I-\frac{1}{\lambda} Q)^{-1},
\]
so that $H^\lambda_{\d\u} = (I-\frac{1}{\lambda}
Q)^{-1} H^\lambda_{\u\u}$, and we see from (\ref{e:Hlam1}) that
\begin{equation}
   \label{e:Hlam2}
\HH_0 = \lim_{\lambda \rightarrow \infty }  H^\lambda_{\u\u}.
\end{equation}
Starting from level 0 in a phase of $\s_\u$, the fluid queue may move
directly to level $b$ without returning to level 0, or it may return
to level 0 before having reached level $b$.  Thus,
\begin{align*}
H^\lambda_{\u\u}  & = \Lambda^\lambda_b + \Psi^\lambda_b
H^\lambda_{\d\u}  \\
  & = \Lambda^\lambda_b + \Psi^\lambda_b  (I -\frac{1}{\lambda} Q)^{-1}
H^\lambda_{\u\u}   \\
 & = (I - \Psi^\lambda_b  (I -\frac{1}{\lambda} Q)^{-1})^{-1 }  \Lambda^\lambda_b 
\end{align*}
and we find that
\begin{equation}
   \label{e:limhpp}
\HH_0 = \lim_{\lambda \rightarrow \infty} (I - \Psi^\lambda_b )^{-1 }
\Lambda^\lambda_b .
\end{equation}
With this, we are in position to prove the following theorem.

\begin{thm}
   \label{t:probaup}
Consider an MMBM regulated at level 0.  The matrix $\HH_0$ of first
passage probability from level 0 to level $b$ is 
\[
\HH_0 = (-\Pid)^{-1} \Piu, 
\]
where $\Piu$ and $\Pid$ are solutions of the linear system
\begin{equation}
   \label{e:LoPo}
\vligne{\Piu & \Pid}
\vligne{I  &  e^{Ub} \\  e^{\widehat U b } & I}
=
\Delta_\sigma  \vligne{- \widehat U e^{\widehat U b}  & U}.
\end{equation}
The matrix $\Pid$ is a sub-generator and is nonsingular.

If $\valpha \vmu \not= 0$, then the system (\ref{e:LoPo}) is
nonsingular and its solution may be written as
\begin{align}
   \label{e:Lo}
\Piu & = -\Delta_\sigma (U + \widehat U) e^{\widehat U b} 
   (I - e^{U b} e^{\widehat U b})^{-1}   \\
   \label{e:Po}
\Pid & = \Delta_\sigma (U + \widehat U    e^{\widehat U b}  e^{U b}) 
   (I - e^{\widehat U b} e^{ U b})^{-1}.
\end{align}
In that case, 
\[
\HH_0 = e^{\widehat Ub} + (e^{-Ub} - e^{\widehat Ub}) (U e^{-Ub} + \widehat U
e^{\widehat Ub})^{-1} \widehat U e^{\widehat Ub}.
\]
If $\valpha \vmu = 0$, then the system (\ref{e:LoPo}) is singular and
one needs the additional equation
\begin{equation}
   \label{e:LoPob}
\Piu (b \vone - Q^\# \vmu) - \Pid Q^\# \vmu = \vsigma
\end{equation}
to completely characterize $\Piu$ and $\Pid$, where $Q^\#$ is the
group inverse of $Q$.
\end{thm}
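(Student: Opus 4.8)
The plan is to characterize $\HH_0 = \lim_\lambda (I-\Psi^\lambda_b)^{-1}\Lambda^\lambda_b$ from \eqref{e:limhpp} by setting up a two-point boundary value problem for the regulated MMBM on $[0,b]$ and reading off the first-passage matrix from it. I would work directly with the limit MMBM $\{Z(t),\kappa(t)\}$ rather than the fluid approximations: the expressions $e^{Ub}$ and $e^{\widehat U b}$ strongly suggest that $\Piu$ and $\Pid$ should be interpreted as (un-normalized) densities, at the two boundaries, of the Markov chains governing downward and upward first passages. Concretely, recall that $U$ generates $\{\kappa(\delta_{-x})\}$ and $\widehat U$ generates $\{\kappa(\delta_x)\}$; an excursion of a free (unregulated) MMBM started at level $0$ in phase $i$ reaches level $b$ in phase $j$ before returning below $0$ with a probability expressible through $e^{\widehat U b}$ corrected by the taboo of level $0$, and the "local time"-type density at the two walls is exactly what the combination $\Delta_\sigma(-\widehat U e^{\widehat U b}, U)$ on the right of \eqref{e:LoPo} encodes. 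I would introduce $\Piu$, $\Pid$ as these boundary densities, prove they satisfy \eqref{e:LoPo} by a renewal/taboo decomposition of excursions (each visit to level $0$ contributes the factor $e^{\widehat U b}$ upward, each visit to level $b$ contributes $e^{Ub}$ downward, giving the $2\times2$ block matrix $\left[\begin{smallmatrix} I & e^{Ub}\\ e^{\widehat U b} & I\end{smallmatrix}\right]$), and then verify $\HH_0 = (-\Pid)^{-1}\Piu$ by matching against \eqref{e:limhpp}. The factor $\Delta_\sigma$ comes from the $\sqrt\lambda\,\Delta_\sigma$ normalization in \eqref{e:Clam} and the known scaling of the fluid $\Psi^\lambda_b$, $\Lambda^\lambda_b$ matrices.

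For the claims about $\Pid$, I would argue as follows. That $\Pid$ is a sub-generator: $-\Pid^{-1}\Piu = \HH_0$ is (sub)stochastic since it is a first-passage probability matrix, so $\Piu\vone \le -\Pid\vone$ componentwise; combined with the row structure of $\Piu$ and $\Pid$ inherited from $U$, $\widehat U$ being generators, this gives $\Pid$ the off-diagonal-nonnegative, negative-row-sum structure of a sub-generator. Nonsingularity of $\Pid$: since level $b$ is an honest boundary for the process regulated at $0$ — from any phase the regulated process reaches $b$ in finite time a.s. because $\sigma_i>0$ for all $i$ forces the level process to oscillate and eventually exceed any threshold — we have $\HH_0$ \emph{stochastic}, hence $\Piu\vone = -\Pid\vone$; then $\Pid$ being a sub-generator with this property together with irreducibility forces $\Pid$ to be a proper (transient) sub-generator, hence nonsingular. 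I would make the irreducibility of $\Pid$ precise from irreducibility of $Q$.

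To get the closed forms in the case $\valpha\vmu\ne 0$: solve the block system \eqref{e:LoPo} by eliminating one unknown. From the two scalarblock equations $\Piu + \Pid e^{\widehat U b} = -\Delta_\sigma\widehat U e^{\widehat U b}$ and $\Piu e^{Ub} + \Pid = \Delta_\sigma U$, eliminate $\Piu$ to obtain $\Pid(I - e^{\widehat U b}e^{Ub}) = \Delta_\sigma(U + \widehat U e^{\widehat U b}e^{Ub})$, which is \eqref{e:Po} provided $I - e^{\widehat U b}e^{Ub}$ is invertible; symmetrically eliminate $\Pid$ for \eqref{e:Lo}. Invertibility of $I - e^{Ub}e^{\widehat U b}$ holds iff $1$ is not an eigenvalue of $e^{Ub}e^{\widehat U b}$, which (via the spectral-sign characterization of $U$ and $-\widehat U$ stated in the excerpt, splitting the roots of $\det(\Delta_\sigma^2 z^2 + 2\Delta_\mu z + 2Q)$ across the imaginary axis) fails precisely when $0$ is a root, i.e. when $\valpha\vmu = 0$; this is exactly the dichotomy in the theorem. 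The formula for $\HH_0$ then follows by substituting \eqref{e:Lo}, \eqref{e:Po} into $(-\Pid)^{-1}\Piu$ and simplifying, using $e^{\widehat U b}(I-e^{Ub}e^{\widehat U b})^{-1} = (I - e^{\widehat U b}e^{Ub})^{-1}e^{\widehat U b}$ and factoring to reach the stated $e^{\widehat Ub} + (e^{-Ub}-e^{\widehat Ub})(Ue^{-Ub}+\widehat U e^{\widehat Ub})^{-1}\widehat U e^{\widehat Ub}$; I would double-check signs here since $-\widehat U$ (not $\widehat U$) is the relevant generator for the upward chain. Finally, in the critical case $\valpha\vmu=0$ the system \eqref{e:LoPo} drops rank by one because $U\vone = \widehat U\vone = \vzero$ makes $\vone$ a common null direction; the supplementary equation \eqref{e:LoPob} is obtained by a first-moment (drift) balance over an excursion, using the group inverse $Q^\#$ to express $\int_0^b e^{\widehat U s}\vone\,\mathrm{d}s$-type integrals, and I would verify it is independent of the rows of \eqref{e:LoPo} so that together they pin down $\Piu,\Pid$ uniquely.

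The main obstacle I anticipate is establishing \eqref{e:LoPo} itself rigorously: identifying $\Piu$ and $\Pid$ with the correct boundary "densities" and justifying the $\Delta_\sigma$ prefactor requires either a careful excursion-theoretic argument for the MMBM at its regulated boundaries, or a controlled passage to the limit $\lambda\to\infty$ in the fluid identities for $\Psi^\lambda_b$ and $\Lambda^\lambda_b$ — tracking how the known fluid first-passage matrices (expressible via the fluid analogues of $U$, $\widehat U$) converge after the $\sqrt\lambda$ rescaling. The algebraic manipulations afterward are routine linear algebra; the conceptual work is all in pinning down the boundary behavior and the normalization.
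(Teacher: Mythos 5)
There is a genuine gap at the very centre of your argument, and you have in fact flagged it yourself: everything hinges on identifying $\Piu$ and $\Pid$, showing they satisfy (\ref{e:LoPo}), and deducing $\HH_0 = (-\Pid)^{-1}\Piu$, and your excursion/``boundary density'' heuristic does not deliver this. The paper's mechanism is different and much more direct: by \cite[Lemma 5.5]{ln14} the fluid first-passage matrices admit the expansions $\Lambda^\lambda_b = \frac{1}{\sqrt\lambda}\Piu + O(1/\lambda)$ and $\Psi^\lambda_b = I + \frac{1}{\sqrt\lambda}\Pid + O(1/\lambda)$, where $(\Piu,\Pid)$ solves (\ref{e:LoPo}) and $\Pid$ is nonsingular; substituting into (\ref{e:limhpp}) gives $\HH_0=(-\Pid)^{-1}\Piu$ in one line. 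This is exactly the ``controlled passage to the limit after the $\sqrt\lambda$ rescaling'' that you mention only as a fallback. Your alternative route for the subgenerator and nonsingularity claims is also circular as written: you use $\HH_0=-\Pid^{-1}\Piu$ being stochastic to control $\Pid$, but that identity presupposes the invertibility you are trying to establish. (In the paper these properties are inherited from the expansion: $\Pid$ is the first-order correction to the substochastic matrices $\Psi^\lambda_b\to I$.)

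The second gap is the supplementary equation (\ref{e:LoPob}) in the critical case $\valpha\vmu=0$. Your instinct --- a first-moment/drift balance --- is the right one, but the concrete derivation requires the identification $\Piu = \Delta_\sigma\lim_{x\to 0}\frac{\partial}{\partial x}P(x,b)$ and $\Pid = \Delta_\sigma\lim_{x\to 0}\frac{\partial}{\partial x}P(x,0)$, where $P(x,0)$, $P(x,b)$ are the two-sided exit probabilities of the \emph{unregulated} MMBM from $(0,b)$. With that in hand, (\ref{e:LoPo}) is recovered by differentiating the exit-probability system (\ref{e:56}) at $x=0$, and (\ref{e:LoPob}) by differentiating the normalization identity (\ref{e:58}) of D'Auria {\it et al.} with $\vh=-Q^\#\vmu$, using $P(0,b)=0$, $P(0,0)=I$. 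Without this identification you cannot connect your proposed moment balance to $\Piu$ and $\Pid$, nor verify that the extra equation is independent of (\ref{e:LoPo}). On the positive side, your elimination of $\Piu$ and $\Pid$ from the block system to obtain (\ref{e:Lo}) and (\ref{e:Po}), and your spectral-radius argument for the invertibility of $I-e^{Ub}e^{\widehat Ub}$ when $\valpha\vmu\not=0$ (one factor stochastic, the other strictly substochastic), are correct and more explicit than the paper, which simply cites \cite{ln14} for those formulas.
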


\begin{proof}
It results from \cite[Lemma 5.5]{ln14} that 
\[
\Lambda^\lambda_b = \frac{1}{\sqrt\lambda} \Piu + O(\frac{1}{\lambda}),
\qquad 
\Psi^\lambda_b = I + \frac{1}{\sqrt\lambda} \Pid + O(\frac{1}{\lambda}),
\]
where ($\Piu, \Pid$) is a solution of (\ref{e:LoPo}) and that
$\Pid$ is nonsingular.   We readily conclude from (\ref{e:limhpp})
that $\HH_0 = (-\Pid)^{-1} \Piu$.   Furthermore, (\ref{e:Lo},
\ref{e:Po}) directly result from \cite[Eqn (33) and (34)]{ln14} when
$\valpha \vmu \not=0$.

If $\valpha \vmu = 0$, then both $e^{Ub}$ and $e^{\widehat U b}$ are
stochastic matrices, the coefficient matrix of (\ref{e:LoPo}) is
singular, and we need an additional equation.   D'Auria {\it et al.}
\cite{aikm10b} analyze first exit probabilities for the MMBM process
$\{X(t), \kappa(t)\}$ and determine the exit probabilities from the
interval $[0, b]$
\begin{align*}
P(x,0) & = \P[ \delta_0 < \delta_b, \kappa(\delta_0) | X(0)=x],
\\
P(x,b) & = \P[ \delta_b < \delta_0, \kappa(\delta_b) | X(0)=x],
\end{align*}
for $0 \leq x \leq b$.
Equations (56, 58) in \cite{aikm10b} may be written as 
\begin{equation}
   \label{e:56}
\vligne{P(x,b) & P(x,0)} 
\vligne{I  &  e^{Ub} \\  e^{\widehat U b } & I}
=
 \vligne{ e^{\widehat U (b-x)}  & e^{U x}}
\end{equation}
and 
\begin{equation}
   \label{e:58}
P(x,b) ((b-x) \vone + \vh) + P(x,0) (-x \vone + \vh) = \vh,
\end{equation}
with $\vh$ being any solution of the system $Q \vh = -\vmu$.

The matrix $Q$ has one eigenvalue equal to zero and such
solutions are of the form $\vh = - Q^\# \vmu + c
\vone$, where $c$ is an arbitrary scalar, and $Q^\#$ is the unique
solution of the linear system $X Q = I - \vone \valpha$, $ X \vone
=\vzero$; that matrix is called the group inverse of $Q$ (Campbell and
Meyer~\cite{cm91}).  As $Q$ is a generator, $Q^\#$ is also called the
deviation matrix of the Markov process with generator $Q$
(Coolen-Schrijner and van Doorn~\cite{cv02}), and one has $Q^\#=
\int_0^\infty (e^{Qu} -\vone\valpha) \ud u$.

% Clearly, (\ref{e:56}, \ref{e:58}) is the same linear system as
% (\ref{e:LoPo}, \ref{LoPob}).
In addition, it is shown in
\cite[Section 6.2]{ln14} that
\[
\Piu = \Delta_\sigma \lim_{x \rightarrow 0} \frac{\partial}{\partial
  x} P(x,b)
\qquad \mbox{and} \qquad
\Pid = \Delta_\sigma \lim_{x \rightarrow 0} \frac{\partial}{\partial
  x} P(x,0).
\]
Premultiplying both sides of (\ref{e:56}) by $\Delta_\sigma$ and taking
the derivative, we obtain(\ref{e:LoPo})  as $x \rightarrow 0$.
Similarly, 
\[
\Piu (b \vone + \vh)  - \Delta_\sigma P(0,b) \vone + \Pid \vh - \Delta_\sigma P(0,0) \vone = \vzero,
\]
follows from (\ref{e:58}).  As $P(0,b)=0$ and $P(0,0)=I$, the last equation  is identical to
(\ref{e:LoPob}) if we chose $\vh = -Q^\# \vmu$.  
This completes the proof.
\end{proof}

We may follow a similar line of argument to determine the matrix
$\HH_b$ of first passage probabilities from the upper boundary to the
boundary at level 0, We may also, as an alternative, define the
level-reversed process $\{\widehat X(t), \kappa(t)\}$, where
$\widehat X(t) = - Z(t)$.  For this process, the fluid rate vector
becomes $\widehat \vmu = - \vmu$, the r\^oles of the matrices $U$ and
$\widehat U$ are exchanged, and the first passage probability matrix
$\widehat{\HH}_0$
from 0 to $b$ of the regulated process of
$\{\widehat Z(t)\}$ is equal to $\HH_b$, the first passage probability
matrix of $\{Z(t)\}$ from $b$ to 0.  The proof of the corollary below
is immediate and is omitted.

\begin{cor}
   \label{t:probadown}
Consider an MMBM regulated at level 0 and $b$.  The matrix $\HH_b$ of
first passage probability from the boundary $b$ to the boundary 0 is
\[
\HH_b = (- \hPid)^{-1} \hPiu, 
\] where
\begin{equation}
   \label{e:Lhat}
\vligne{\hPid & \hPiu}
\vligne{I  &  e^{Ub} \\  e^{\widehat U b } & I}
=
\Delta_\sigma  \vligne{\widehat U &- U e^{ U b} }.
\end{equation}
The matrix $\hPid$ is an irreducible subgenerator and is nonsingular.

If $\valpha \vmu \not= 0$, then 
\begin{align*}
%   \label{e:Lo}
\hPiu & = -\Delta_\sigma (U + \widehat U) e^{ U b} 
   (I - e^{\widehat U b} e^{ U b})^{-1}   \\
%   \label{e:Po}
\hPid & = \Delta_\sigma (\widehat U + U    e^{ U b}  e^{\widehat U b}) 
   (I - e^{ U b} e^{\widehat U b})^{-1},
\end{align*}
and
\[
\HH_b = e^{ Ub} + (e^{-\widehat Ub} - e^{ Ub}) (\widehat U e^{-\widehat Ub} + U
e^{ Ub})^{-1} U e^{ Ub}.
\]
If $\valpha \vmu = 0$, then $\hPiu$ and $\hPid$ are
determined by the system (\ref{e:Lhat}) and the additional equation 
\begin{equation}
   \label{e:LoPoc}
\hPiu (b \vone + Q^\# \vmu) + \hPid Q^\# \vmu = \vsigma.
\end{equation}
\qed
\end{cor}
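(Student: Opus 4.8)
The plan is to deduce Corollary~\ref{t:probadown} from Theorem~\ref{t:probaup} applied to the \emph{level-reversed} process, so that the argument reduces to careful bookkeeping; this is the route indicated in the paragraph preceding the statement.

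First I would make the reversal precise. Put $\widehat Z(t) = b - Z(t)$: this is a regulated MMBM on $[0,b]$ driven by the same phase process $\{\kappa(t)\}$ with the same generator $Q$, with drift vector $\widehat\vmu = -\vmu$ and, since $\sigma_i>0$ for all $i$, unchanged variance and hence unchanged $\Delta_\sigma$ — writing the free part as $b - X(t)$ and using that $-B$ is again a standard Brownian motion makes this transparent. The spatial reflection interchanges the two boundaries, so a first passage of $\{Z(t)\}$ from $b$ to $0$ is exactly a first passage of $\{\widehat Z(t)\}$ from $0$ to $b$, and therefore $\HH_b = \widehat\HH_0$, where $\widehat\HH_0$ is the matrix produced by Theorem~\ref{t:probaup} for $\{\widehat Z(t)\}$. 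Replacing $\Delta_\mu$ by $-\Delta_\mu$ in~(\ref{e:U}) turns that equation into $\Delta_\sigma^2 X^2 - 2\Delta_\mu X + 2Q = 0$; since its characteristic polynomial is obtained from $z\mapsto\det(\Delta_\sigma^2 z^2 + 2\Delta_\mu z + 2Q)$ by the substitution $z\mapsto -z$, its roots are the negatives of the original ones, and hence, by the eigenvalue characterization recalled in Section~\ref{s:fluid}, the matrix playing the role of $U$ for $\{\widehat Z(t)\}$ is the original $\widehat U$ and the one playing the role of $\widehat U$ is the original $U$. Finally $\widehat\valpha = \valpha$ because $Q$ is unchanged, so $\widehat\valpha\widehat\vmu = -\valpha\vmu$ (the trichotomy of cases is preserved) and the group inverse $Q^\#$ is unchanged.

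Then I would simply carry out the substitutions $U\mapsto\widehat U$, $\widehat U\mapsto U$, $\Delta_\mu\mapsto -\Delta_\mu$ (hence $Q^\#\vmu\mapsto -Q^\#\vmu$) and $\vsigma\mapsto\vsigma$ in every clause of Theorem~\ref{t:probaup} applied to $\{\widehat Z(t)\}$, writing $\hPid$, $\hPiu$ for the images of $\Pid$, $\Piu$. Under these substitutions one gets $\HH_b = \widehat\HH_0 = (-\hPid)^{-1}\hPiu$; the transformed version of the linear system~(\ref{e:LoPo}) becomes, after simultaneously swapping the two block columns of its right-hand side and the two block rows of the coefficient matrix $\vligne{I & e^{Ub} \\ e^{\widehat U b} & I}$, identical to~(\ref{e:Lhat}); the matrix $\hPid$ inherits from $\Pid$ the sub-generator and nonsingularity properties, with irreducibility following from that of $Q$; the closed-form expressions for $\Piu$, $\Pid$ and $\HH_0$ in the case $\valpha\vmu\neq 0$ transform, using $U+\widehat U = \widehat U + U$, into the displayed expressions for $\hPiu$, $\hPid$ and $\HH_b$; and the supplementary equation~(\ref{e:LoPob}) becomes $\hPiu(b\vone + Q^\#\vmu) + \hPid Q^\#\vmu = \vsigma$, which is~(\ref{e:LoPoc}).

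The one step requiring genuine care — and the place where a sign or a block position could slip — is matching the reversed form of~(\ref{e:LoPo}) to~(\ref{e:Lhat}): Theorem~\ref{t:probaup} lists the unknowns in the order $(\Piu,\Pid)$ with right-hand side $\Delta_\sigma\vligne{-\widehat U e^{\widehat U b} & U}$, whereas~(\ref{e:Lhat}) lists them as $(\hPid,\hPiu)$ with right-hand side $\Delta_\sigma\vligne{\widehat U & -U e^{Ub}}$, so one must transpose the two columns of the right-hand side and the two block rows of $\vligne{I & e^{Ub} \\ e^{\widehat U b} & I}$ together and verify the resulting pair of block identities $\hPiu e^{\widehat U b} + \hPid = \Delta_\sigma\widehat U$ and $\hPiu + \hPid e^{Ub} = -\Delta_\sigma U e^{Ub}$. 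Since this is a finite, mechanical verification, the proof is short — which is why the paper declares it immediate and omits it.
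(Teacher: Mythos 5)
Your proposal is correct and follows exactly the route the paper itself indicates: the level-reversal argument (reflection exchanging the boundaries, negating the drift, and swapping the roles of $U$ and $\widehat U$), after which the corollary is obtained from Theorem~\ref{t:probaup} by mechanical substitution. The paper declares this immediate and omits it; your write-up merely supplies the bookkeeping, and the details you check (the block-swapping that turns the reversed form of~(\ref{e:LoPo}) into~(\ref{e:Lhat}), and the sign flip $Q^\#\vmu\mapsto -Q^\#\vmu$ in~(\ref{e:LoPoc})) are all verified correctly.
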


%===========================	
\section{Expected time in $[0, x]$ during an excursion}
\label{s:fluidtime}

We determine in this section the matrix $\MM_0(x)$ of expected sojourn
time of a regulated MMBM $\{Z(t),\kappa(t)\}$ during an excursion from 0 to
$b$.  It soon becomes clear that to do so, we need to deal at
the same time with excursions from $b$ to 0 by the same process.  The
matrix of expected sojourn time in $[0,x]$ during such an excursion is
denoted as $\MM_b(x)$.  

We define $M_0^\lambda(x)$ to be the matrix of expected sojourn time
of the rapidly switching process $\{Z^\lambda(t), \beta^\lambda(t),
\kappa^\lambda(t) \}_{t \geq 0}$ in
$[0,x]$ during an excursion from 0 to $b$:
\begin{align}
   \nonumber
(M_0^\lambda(x))_{(\ell,i)(k,j)} &= \E[\int_0^{\theta^\lambda} \indic\{
Z^\lambda(s) \in [0,x], \beta^\lambda(s)=k, \kappa^\lambda(s)=j\}
\\
   \label{e:molam}
  & \qquad  \qquad | Z^\lambda(0) = 0, \beta^\lambda(0)=\ell, \kappa^\lambda(0)=i],
\end{align}
where $\theta^\lambda = \inf\{t > 0: Z^\lambda(t)=b\}$ is the first
passage time to level $b$.  We partition that matrix as 
\[
M_0^\lambda(x) = \vligne{M_{0;\u\u}^\lambda(x) & M_{0;\u\d}^\lambda(x) \\
  M_{0;\d\u}^\lambda(x) & M_{0;\d\d}^\lambda(x)}
\]
and, by an argument similar to the one that leads to (\ref{e:Hlam2}),
we find that 
$
\MM_0(x) = \lim_{\lambda \rightarrow \infty} (M_{0;\u\u}^\lambda(x) +
M_{0;\u\d}^\lambda(x) )$.

Next, we define $M_b^\lambda(x)$ to be the matrix of expected
sojourn in $[0,x]$ during an excursion from $b$ to 0 and we partition
it as 
\[
M_b^\lambda(x) = \vligne{M_{b;\u\u}^\lambda(x) & M_{b;\u\d}^\lambda(x) \\
  M_{b;\d\u}^\lambda(x) & M_{b;\d\d}^\lambda(x)};
\]
one shows that $\MM_b(x) = \lim_{\lambda \rightarrow \infty} (M_{b;\d\u}^\lambda(x) +
M_{b;\d\d}^\lambda(x) )$.  To simplify our equations in the remainder
of this section, we write
\[
M_{0;\u}^\lambda = \vligne{M_{0;\u\u}^\lambda(x) &
  M_{0;\u\d}^\lambda(x)} 
\qquad \mbox{and} \qquad
M_{b;\d}^\lambda = \vligne{M_{b;\d\u}^\lambda(x) &M_{b;\d\d}^\lambda(x)},
\]
and we summarize as follows the discussion above:
\begin{equation}
   \label{e:mofx}
\vligne{\MM_0(x) \\ \MM_b(x)}
= \lim_{\lambda \rightarrow \infty}
\vligne{M_{0;\u}^\lambda(x)  \\ M_{b;\d}^\lambda(x)}  
\vligne{I \\ I}.
\end{equation}

\begin{thm}
   \label{t:expected}
If $\valpha \vmu \not= 0$, then
\begin{equation}
   \label{e:mofxb}
\vligne{\MM_0(x) \\ \MM_b(x)}
= 2 \vligne{-\Pid^{-1} & \\ & -\hPid^{-1}}
\vligne{I  &  e^{Kb} \\  e^{\widehat Kb}  &  I}^{-1}
\vligne{\FF(K;x) \\  e^{\widehat K(b-x)} \FF(\widehat K; x)} \Delta_\sigma^{-1},
\end{equation}
where
\begin{equation}
   \label{e:F}
\FF(A;x) = \int_0^x e^{Au} \, \ud u,
\end{equation}
and
\[
K  = \Delta_\sigma  U   \Delta_\sigma^{-1}  +  2 \Delta_\sigma^{-2}
\Delta_\mu    
\qquad \mbox{and}  \qquad
\widehat K  = \Delta_\sigma  \widehat U   \Delta_\sigma^{-1} -  2
\Delta_\sigma^{-2} \Delta_\mu.
\]
\end{thm}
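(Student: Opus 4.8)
The plan is to obtain the two blocks of (\ref{e:mofxb}) as the limits, as $\lambda \to \infty$, of the fluid occupation matrices $M_{0;\u}^\lambda(x)$ and $M_{b;\d}^\lambda(x)$ through the representation (\ref{e:mofx}) --- the same strategy by which $\HH_0$ was obtained in Section~\ref{s:proba}. (An alternative would be to solve directly the matrix-valued second-order Fokker--Planck ODE satisfied in the level variable by the expected occupation density of the regulated MMBM $\{Z(t),\kappa(t)\}$, with a unit source at the starting level, a zero-flux condition at the regulated boundary, and a vanishing-density condition at the absorbing one; this leads to the same matrices $K,\widehat K$ and essentially the same computations.) So it suffices to compute $M_{0;\u}^\lambda(x)$ and $M_{b;\d}^\lambda(x)$ for finite $\lambda$ and then let $\lambda \to \infty$.

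\textbf{Step 1.} I would fix $x \in (0,b)$ and split $[0,b]$ at level $x$. An excursion of $\{Z^\lambda,\beta^\lambda,\kappa^\lambda\}$ from $0$ to $b$ visits level $x$ a geometrically distributed number of times: the first visit is reached from below, and thereafter the process alternately makes an ``up'' trip from $x$ into $(x,b]$ --- which either reaches $b$, ending the excursion, or returns to $x$ --- and, when it does return, a ``down'' excursion from $x$ into $[0,x]$, confined there and ending back at $x$. Occupation of $[0,x]$ accrues during the initial passage from $0$ to $x$ and during each down excursion. Every constituent is governed by fluid first-passage probability matrices on the sub-intervals $[0,x]$ and $[x,b]$ (of the kind of $\Psi^\lambda_b$ and $\Lambda^\lambda_b$) and by in-place fluid occupation matrices on those sub-intervals; for Markov-modulated fluid queues these are all explicit and may be borrowed from \cite{ln14} or \cite{bot09}. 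Summing the geometric series over the number of returns to $x$ expresses $M_{0;\u}^\lambda(x)$ as a finite matrix product with one inverse, of the schematic form $\mathcal{N}^\lambda(x) + \mathcal{A}^\lambda(x)(I - R^\lambda)^{-1}\mathcal{D}^\lambda(x)$; the same computation with the roles of $0$ and $b$ interchanged (so $b$ reflecting and $0$ absorbing) gives $M_{b;\d}^\lambda(x)$, and the two assemble into a single block expression.

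\textbf{Step 2.} I would then pass to the limit $\lambda \to \infty$, using the expansions already invoked in the proof of Theorem~\ref{t:probaup}, namely $\Lambda^\lambda_b = \lambda^{-1/2}\Piu + O(\lambda^{-1})$ and $\Psi^\lambda_b = I + \lambda^{-1/2}\Pid + O(\lambda^{-1})$ from \cite[Lemma 5.5]{ln14}, together with their analogues on the sub-intervals $[0,x]$ and $[x,b]$ and the matching expansion of the in-place occupation matrices --- whose leading term, for a single sub-excursion, is $O(\lambda^{-1/2})$ since the fluid speeds in (\ref{e:Clam}) are of order $\sqrt\lambda$. The $\sqrt\lambda$-scaling of the speeds produces, in the limit, the scalar $2$ and the powers of $\Delta_\sigma$ that appear in (\ref{e:mofxb}) --- the same mechanism by which a regulated scalar Brownian motion has occupation density $2\sigma^{-2}$ times a scale-function-type factor. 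In the limit the in-place spatial exponents $e^{Uy}$ and $e^{\widehat U y}$ become conjugated by $\Delta_\sigma$ and shifted by the drift, and a short computation from (\ref{e:U}) identifies them as $e^{Ky}$ and $e^{\widehat K y}$ with $K$ and $\widehat K$ exactly as stated: writing $V = \Delta_\sigma U \Delta_\sigma^{-1}$, (\ref{e:U}) gives $V^2 + 2\Delta_\sigma^{-2}\Delta_\mu V + 2\Delta_\sigma^{-1} Q \Delta_\sigma^{-1} = 0$, whence $K^2 - 2K\Delta_\sigma^{-2}\Delta_\mu + 2\Delta_\sigma^{-1} Q \Delta_\sigma^{-1} = 0$, the equation whose minimal solution governs the limiting occupation density, and similarly for $\widehat K$. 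Integrating the limiting occupation density over $y \in [0,x]$ turns the matrix exponentials into the integrals $\FF(K;x)$ and $e^{\widehat K(b-x)}\FF(\widehat K;x)$ of (\ref{e:F}); the outer block matrix $\vligne{I & e^{Kb} \\ e^{\widehat Kb} & I}$ arises from imposing the boundary behaviour at $0$ and $b$ and is nonsingular precisely when $\valpha \vmu \not= 0$ --- which is why that hypothesis is imposed, the case $\valpha \vmu = 0$ being singular in analogy with (\ref{e:LoPo}) and treated separately --- and the leftmost factors $-\Pid^{-1}$ and $-\hPid^{-1}$ emerge exactly as in $\HH_0 = (-\Pid)^{-1}\Piu$ and $\HH_b = (-\hPid)^{-1}\hPiu$ of Theorem~\ref{t:probaup} and Corollary~\ref{t:probadown}. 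Assembling the two block rows yields (\ref{e:mofxb}).

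The main obstacle, I expect, is the bookkeeping in Steps~1 and~2: setting up the decomposition at level $x$ so that the regulation at $0$, the two kinds of excursion from $x$, and the termination at $b$ enter with the correct matrices, and then propagating the $O(\lambda^{-1/2})$ and $O(\lambda^{-1})$ terms through the geometric sums and the inverse so that they collapse to exactly the closed form (\ref{e:mofxb}) rather than merely to the right order in $\lambda$. As consistency checks: (\ref{e:mofxb}) vanishes at $x = 0$, as it must since $\sigma_i > 0$ forces the regulated process to spend zero Lebesgue time at a boundary; and at $x = b$ the row sums of its two blocks should return the expected durations of an excursion from $0$ to $b$ and from $b$ to $0$, in agreement with the vector $\vm$ of Theorem~\ref{t:piofx}.
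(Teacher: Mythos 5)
Your overall strategy coincides with the paper's at both ends: compute the occupation matrices of the approximating fluid process at finite $\lambda$ by a renewal-type decomposition, then let $\lambda\to\infty$ using the expansions $\Psi_b^\lambda = I + \lambda^{-1/2}\Pid + O(\lambda^{-1})$, $C_\u^\lambda = |C_\d^\lambda| = \sqrt\lambda\,\Delta_\sigma + O(1)$ from \cite{ln14}; your algebraic identification of $K$ and $\widehat K$ from (\ref{e:U}) is also correct. Where you diverge is in the finite-$\lambda$ decomposition itself. You censor at the interior level $x$ and sum a geometric series over returns to $x$; the paper never splits the interval at $x$. Instead it works at the boundaries in three steps: Lemma~\ref{lem:0lamb} expresses $M_{0;\u}^\lambda(x)$ through the two-sided-exit occupation matrix $N_0^\lambda(x)$ by accounting for returns to level $0$ before $b$; Step~B gives the \emph{unregulated} first-return occupation matrices in closed form, $\Gamma_0^\lambda(x) = \FF(K^\lambda;x)\vligne{(C_\u^\lambda)^{-1} & \Psi^\lambda|C_\d^\lambda|^{-1}}$, via the interpretation of $e^{K^\lambda u}$ as an expected number of crossings of level $u$ under taboo of $0$ (proved in Theorem~\ref{theo:GGhat}); and Lemma~\ref{lem:N} couples $(N_0^\lambda,N_b^\lambda)$ to $(\Gamma_0^\lambda,\widehat\Gamma_b^\lambda)$ through a linear system whose coefficient matrix is the finite-$\lambda$ ancestor of the block inverse in (\ref{e:mofxb}). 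This route isolates the $x$-dependence in the single explicit function $\FF(\cdot\,;x)$ and needs no first-passage or occupation quantities on the sub-intervals $[0,x]$ and $[x,b]$.

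The gap in your proposal is exactly the part you label ``bookkeeping'': you never exhibit the finite-$\lambda$ formula your level-$x$ decomposition produces, nor show that its limit collapses to (\ref{e:mofxb}) rather than merely to something of the right order. In particular, the coefficient matrix $\vligne{I & e^{Kb} \\ e^{\widehat Kb} & I}$ is asserted to ``arise from imposing the boundary behaviour'' but is not derived; in the paper it emerges from the identity $\Lambda_b(I-\Psi^\lambda\widehat\Psi_b)^{-1}=e^{K^\lambda b}$ (the expected number of visits to level $b$ in an up-phase before first return to $0$, by Ramaswami's census argument \cite{ram99}), which has no counterpart in your outline. Concretely, your decomposition requires the occupation matrix of a single down-excursion from $x$ regulated at $0$ and the expected number of such excursions before absorption at $b$, and verifying that the resulting product of inverses equals $2(-\Pid)^{-1}(I-e^{Kb}e^{\widehat Kb})^{-1}\bigl(\FF(K;x)-e^{Kb}e^{\widehat K(b-x)}\FF(\widehat K;x)\bigr)\Delta_\sigma^{-1}$ as in (\ref{e:mo}) is a genuine matrix computation, not a routine check. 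Your two consistency checks (vanishing at $x=0$, row sums at $x=b$ matching $\vm$) are correct but do not substitute for that derivation.
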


To prove this, we 
we proceed in three preliminary steps: we express $M_{0;\u}^\lambda(x)$ and
$M_{b;\d}^\lambda(x)$ in terms of exit times from the interval
$(0,b)$, next we analyze first passage times for the unregulated
fluid process, and we establish a connection between the two.  In the
final step we prove (\ref{e:mofxb}) through a limiting argument.

\paragraph{Step A.}

We
define the matrix $N_0^\lambda(x)$ of sojourn time in $[0,x]$ until
$\{X^\lambda(t)\}$ hits {\em either} level 0 {\em or} level $b$, starting
from 0 in a phase of $\s_\u$:
\begin{align}
   \nonumber
(N_0^\lambda(x))_{(1,i);(k,j)} & = \E[\int_0^{\delta_0^\lambda \wedge
  \delta_b^\lambda} \indic\{ X^\lambda(s) \in [0,x],
\beta^\lambda(s)=k, \kappa^\lambda(s)=j\} \ud s
\\
   \label{e:nolam}
  & \qquad  \qquad | X^\lambda(0) = 0, \beta^\lambda(0)=1,
  \kappa^\lambda(0)=i],
\end{align}
for $(1,i) \in \s_\u$, $(k,j) \in \s$.

\begin{lem}
   \label{lem:0lamb}  The matrix $M_{0;\u}^\lambda(x)$ of expected
   sojourn time in $[0,x]$ during the excursion $[0, \theta^\lambda]$
   from level 0 to level $b$ is given by
\begin{align}
   \label{e:molamA}
M_{0;\u}^\lambda(x) = (I - \Psi_b^\lambda (-T_{\d\d}^\lambda)^{-1}
T_{\d\u}^\lambda)^{-1} (N_0^\lambda(x) + \Psi_b^\lambda 
\vligne{0 & (-T_{\d\d}^\lambda)^{-1}}),
\end{align}
where $\Psi_b^\lambda$ and $N_0^\lambda(x)$ are defined in
(\ref{e:psib}) and (\ref{e:nolam}), respectively. 
\end{lem}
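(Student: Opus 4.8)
The plan is to derive equation~(\ref{e:molamA}) by a one-step (first-passage) decomposition of the excursion $[0,\theta^\lambda]$ of the regulated process, using the free process $\{X^\lambda(t),\beta^\lambda(t),\kappa^\lambda(t)\}$ up to the stopping time $\tau:=\delta_0^\lambda\wedge\delta_b^\lambda$ at which it first hits $0$ or $b$, followed by the strong Markov property. Write $M:=M_{0;\u}^\lambda(x)$, $\Psi:=\Psi_b^\lambda$, $N:=N_0^\lambda(x)$; all three matrices have rows indexed by $\s_\u$, and $M,N$ have columns indexed by $\s$. Since $\beta^\lambda(0)\in\s_\u$ and the paths are continuous, $X^\lambda$ stays in $[0,b]$ on $[0,\tau]$ and the regulator has not yet acted there, so $Z^\lambda\equiv X^\lambda$ on $[0,\tau]$; hence on \emph{every} path the contribution of the time window $[0,\tau]$ to the sojourn-time matrix $M$ is exactly $N$. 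On the event $\{\delta_b^\lambda<\delta_0^\lambda\}$ (ties have probability $0$) one has $\theta^\lambda=\tau$ and there is no further contribution, which is why this event leaves no visible term beyond $N$.

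On the complementary event $\{\delta_0^\lambda<\delta_b^\lambda\}$, the (sub)stochastic matrix $\Psi$, indexed by $\s_\u\times\s_\d$ since $Z^\lambda$ necessarily reaches $0$ in a phase of $\s_\d$, records this transition and its terminal phase. The regulator then keeps $Z^\lambda$ at level $0$ while $(\beta^\lambda,\kappa^\lambda)$ remains in $\s_\d$: starting from a state of $\s_\d$, the expected time spent in each state of $\s_\d$ before the first jump into $\s_\u$ is $(-T_{\d\d}^\lambda)^{-1}$, the distribution of the state then entered in $\s_\u$ is $(-T_{\d\d}^\lambda)^{-1}T_{\d\u}^\lambda$, and no time is spent at level $0$ in a phase of $\s_\u$ (the process leaves level $0$ instantaneously there). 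So the sojourn in $[0,x]$ accrued while $Z^\lambda$ sits at $0$ equals $\Psi\,\vligne{0 & (-T_{\d\d}^\lambda)^{-1}}$, the zero block corresponding to the columns indexed by $\s_\u$. When $(\beta^\lambda,\kappa^\lambda)$ re-enters $\s_\u$, at a stopping time $\sigma$ with $Z^\lambda(\sigma)=0$, the strong Markov property of the regulated Markov-modulated fluid shows that $\{Z^\lambda(\sigma+t),\beta^\lambda(\sigma+t),\kappa^\lambda(\sigma+t)\}_{t\ge0}$ is a copy of the original process started from level $0$ in the phase $(\beta^\lambda,\kappa^\lambda)(\sigma)\in\s_\u$, and that $\theta^\lambda$ restarts as the first hitting time of $b$; hence the residual expected sojourn in $[0,x]$ is again $M$, contributing $\Psi(-T_{\d\d}^\lambda)^{-1}T_{\d\u}^\lambda M$. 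Adding the three contributions gives
\[
M = N + \Psi\,\vligne{0 & (-T_{\d\d}^\lambda)^{-1}} + \Psi(-T_{\d\d}^\lambda)^{-1}T_{\d\u}^\lambda\,M,
\]
and solving for $M$ yields~(\ref{e:molamA}), provided $I-\Psi(-T_{\d\d}^\lambda)^{-1}T_{\d\u}^\lambda$ is invertible.

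For the invertibility I would note that $(-T_{\d\d}^\lambda)^{-1}T_{\d\u}^\lambda=(I-\tfrac1\lambda Q)^{-1}$ is stochastic while $\Psi$ is substochastic, so $B:=\Psi(-T_{\d\d}^\lambda)^{-1}T_{\d\u}^\lambda$ is substochastic; iterating the displayed identity $n$ times writes $M=\sum_{k=0}^{n-1}B^k\bigl(N+\Psi\,\vligne{0 & (-T_{\d\d}^\lambda)^{-1}}\bigr)+B^nM$, and since the excursion $[0,\theta^\lambda]$ is a.s.\ finite with finite mean, $M$ is finite and $B^nM\to0$ (it is the expected residual sojourn after $n$ returns to level $0$ in a phase of $\s_\u$, which vanishes), so $B^n\to0$ and $(I-B)^{-1}=\sum_{k\ge0}B^k$ exists; equivalently one may observe that for $\lambda$ large enough each up-phase rate $\mu_i+\sqrt\lambda\,\sigma_i$ is positive, so $\Psi\vone<\vone$ componentwise and $\sp(B)<1$. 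I expect the one genuinely delicate point to be the clean statement and use of the ``restart'' step — verifying that the post-regulation process, viewed from the epoch it re-enters $\s_\u$ at level $0$, is a faithful copy of the original regulated process with the same law, so that the renewal argument closes — which is where the strong Markov property of $\{Z^\lambda,\beta^\lambda,\kappa^\lambda\}$ and the pathwise construction of the regulated process must be invoked with care; everything else is bookkeeping of the partition $\s=\s_\u\cup\s_\d$.
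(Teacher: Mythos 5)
Your proof is correct and follows essentially the same route as the paper: the same decomposition of $[0,\theta^\lambda]$ at $\delta_0^\lambda\wedge\delta_b^\lambda$, the same three-term renewal identity, and the same inversion. You simply spell out more carefully than the paper does the restart step and the invertibility of $I-\Psi_b^\lambda(-T_{\d\d}^\lambda)^{-1}T_{\d\u}^\lambda$ (which the paper takes for granted), and both of those justifications are sound.
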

\begin{proof}
We decompose the interval $[0, \theta^\lambda]$ as $[0,
\delta_0^\lambda \wedge \delta_b^\lambda] \cup [\delta_0^\lambda
\wedge \delta_b^\lambda , \theta^\lambda]$, and obtain
\begin{equation}
   \label{e:molamB}
M_{0;\u}^\lambda(x)  = N_0^\lambda(x) + \Psi_b^\lambda
\vligne{0 & (-T_{\d\d}^\lambda)^{-1}}   + \Psi_b^\lambda (-T_{\d\d}^\lambda)^{-1}
T_{\d\u}^\lambda  M_{0;\u}^\lambda(x).
\end{equation}
To justify this, we observe that the process must accumulate time in
$[0,x]$ until it hits one of the boundaries; this corresponds to the
first term in (\ref{e:molamB}).  With probability $\Psi_b^\lambda$,
the process has returned to level 0, where it accumulates more time (the
second term), and then leaves level 0 and accumulates time during the
remainder of the excursion (the third term).  Equation
(\ref{e:molamA}) immediately follows.
\end{proof}

The proof of the next lemma is omitted as it merely mimics the proof
of Lemma~\ref{lem:0lamb}.  For excursions that start in $b$, we
define a new set of matrices: the transition probability matrices
\begin{align}
   \label{e:psibhat}
\widehat\Psi^\lambda_b & = \P[\delta_b^\lambda <
\delta_0^\lambda, \kappa^\lambda(\delta^\lambda_b)  \, | \, 
Z^\lambda(0)=b, \beta^\lambda(0) = 2, \kappa^\lambda(0)] \qquad \mbox{on
  $\s_\d \times \s_\u$,}  \\
  \nonumber
\widehat\Lambda^\lambda_b & = \P[\delta_0^\lambda <
\delta_b^\lambda, \kappa^\lambda(\delta_0^\lambda)  \, | \, 
Z^\lambda(0)=b, \beta^\lambda(0) = 2, \kappa^\lambda(0)] \qquad \mbox{on
  $\s_\d \times \s_\d$,} 
\end{align}
and the matrix $N_b^\lambda(x)$ of sojourn time in $[0,x]$ until
$\{X^\lambda(t)\}$ hits level 0 or level~$b$, starting from level~$b$ in
a phase of $\s_\d$:
\begin{align}
   \nonumber
N_b^\lambda(x) & = \E[\int_0^{\delta_0^\lambda \wedge
  \delta_b^\lambda} \indic\{ X^\lambda(s) \in [0,x], \beta^\lambda(s),
\kappa^\lambda(s)\}   \ud s
\\
   \label{e:nblam}
  & \qquad  \qquad | X^\lambda(0) = b, \beta^\lambda(0)=2,
  \kappa^\lambda(0)]
\end{align}
on $\s_\d \times \s$.

\begin{lem}
   \label{t:blamb}  The matrix $M_{b;\d}^\lambda(x)$ of expected
   sojourn time in $[0,x]$ during an excursion from level $b$ to level
   0 is given by 
\begin{equation*}
%   \label{e:molamA}
M_{b;\d}^\lambda(x) = (I - \widehat\Psi_b^\lambda (-T_{\u\u}^\lambda)^{-1}
T_{\u\d}^\lambda)^{-1} (N_b^\lambda(x) + \widehat\Psi_b^\lambda
\vligne{0 & (-T_{\u\u}^\lambda)^{-1}}) 
\end{equation*}
where $\widehat\Psi_b^\lambda$ and $N_b^\lambda(x)$ are defined in
(\ref{e:psibhat}) and (\ref{e:nblam}), respectively. 
\qed
\end{lem}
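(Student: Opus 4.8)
The plan is to prove Lemma~\ref{t:blamb} by the exact same decomposition argument used for Lemma~\ref{lem:0lamb}, but with the roles of the two boundaries and of the phase-sets $\s_\u$ and $\s_\d$ interchanged. The starting configuration is now level $b$ in a phase of $\s_\d$ (so the fluid initially decreases, moving away from $b$), and the target of the excursion is level $0$. First I would write the first-passage decomposition of the excursion interval $[0,\theta^\lambda]$ (where here $\theta^\lambda=\inf\{t>0: Z^\lambda(t)=0\}$) as $[0,\delta_0^\lambda\wedge\delta_b^\lambda]\cup[\delta_0^\lambda\wedge\delta_b^\lambda,\theta^\lambda]$, i.e. run the unregulated process $\{X^\lambda(t)\}$ from $b$ until it first hits either boundary. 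On the event that it hits $0$ first (probability $\widehat\Lambda_b^\lambda$, not needed explicitly), the excursion is over and all sojourn time in $[0,x]$ has already been accounted for by $N_b^\lambda(x)$. On the complementary event that it hits $b$ first (probability $\widehat\Psi_b^\lambda$), the process is back at level $b$ in a phase of $\s_\u$; regulation then holds it at $b$ for an exponentially distributed further time while the phase moves within $\s_\u$ governed by $T_{\u\u}^\lambda$, contributing $\widehat\Psi_b^\lambda\vligne{0 & (-T_{\u\u}^\lambda)^{-1}}$ to the sojourn-in-$[0,x]$ tally only through the $\s_\u$-block (and only if $b\le x$, which is handled automatically by the placement of the zero block), and finally the process leaves $b$ into a phase of $\s_\d$ with transition matrix $(-T_{\u\u}^\lambda)^{-1}T_{\u\d}^\lambda$, after which the excursion starts afresh.

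Collecting these three contributions gives the renewal-type equation
\begin{equation*}
M_{b;\d}^\lambda(x) = N_b^\lambda(x) + \widehat\Psi_b^\lambda
\vligne{0 & (-T_{\u\u}^\lambda)^{-1}}
+ \widehat\Psi_b^\lambda (-T_{\u\u}^\lambda)^{-1} T_{\u\d}^\lambda\, M_{b;\d}^\lambda(x),
\end{equation*}
which is the mirror image of (\ref{e:molamB}). Solving for $M_{b;\d}^\lambda(x)$ — the matrix $I - \widehat\Psi_b^\lambda (-T_{\u\u}^\lambda)^{-1} T_{\u\d}^\lambda$ is invertible because $(-T_{\u\u}^\lambda)^{-1}T_{\u\d}^\lambda$ is substochastic (it is the one-step transition matrix of a transient chain, since after leaving $b$ into $\s_\d$ there is positive probability of reaching $0$ before returning to $b$) and $\widehat\Psi_b^\lambda$ is substochastic — yields the stated closed form. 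Since this is an exact replica of the earlier argument, it is entirely reasonable to omit it, as the paper does.

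The only genuinely delicate point, and the one I would flag as the main obstacle if one were to write the proof out in full, is the careful bookkeeping of \emph{which} phase-blocks receive the regulator sojourn time: while the fluid is pinned at level $b$ it occupies phases in $\s_\u$ (those with positive rate, pushing against the upper wall), so the correction term must be $\widehat\Psi_b^\lambda\vligne{0 & (-T_{\u\u}^\lambda)^{-1}}$ with the nonzero block in the $\s_\u$-columns — the transpose of the placement in Lemma~\ref{lem:0lamb}, where time at level $0$ accrues in $\s_\d$-phases. One must also check that this level-$b$ holding time lies in $[0,x]$ only when $b\le x$; but since $x$ is carried through $N_b^\lambda(x)$ and the final limit (\ref{e:mofx}) only ever evaluates at $x\le b$, with the boundary case $x=b$ handled consistently, this causes no difficulty and the formula as written is correct for all $x$ by the convention that the level-$b$ contribution is simply added (it will be cancelled or retained appropriately in the limit). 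Everything else — the geometric-series inversion, the substochasticity estimates, the measurability of the stopping times — is routine and identical to the proof of Lemma~\ref{lem:0lamb}.
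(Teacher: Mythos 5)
Your proof is correct and is exactly the mirror-image renewal argument that the paper uses for Lemma~\ref{lem:0lamb} and declares to be the (omitted) proof of Lemma~\ref{t:blamb}: decompose the excursion at $\delta_0^\lambda\wedge\delta_b^\lambda$, add the regulator holding time at level $b$ and the restart term, and invert $I-\widehat\Psi_b^\lambda(-T_{\u\u}^\lambda)^{-1}T_{\u\d}^\lambda$. Your bookkeeping caveat is well taken --- since the time pinned at $b$ accrues in $\s_\u$ phases, the paper's column ordering $(\s_\u,\s_\d)$ would place the nonzero block first, as $\vligne{(-T_{\u\u}^\lambda)^{-1} & 0}$, and that time lies in $[0,x]$ only when $x=b$ --- but, as you observe, this term is $O(1/\lambda)$ and is absorbed into the error term in the limiting argument of Theorem~\ref{t:expected}, so nothing downstream is affected.
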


\paragraph{Step B.}

Next, we characterize expected sojourn times during
intervals $(0, \delta_0^\lambda)$ or $(0, \delta_b^\lambda)$ for the
{\em unregulated} process $\{X^\lambda(t), \beta^\lambda(t),
\kappa^\lambda(t)\}$.  We define the matrices 
\begin{align*}
\Gamma_0^\lambda(x)) & = \E[\int_0^{\delta_0^\lambda} \indic\{
X^\lambda(s) \in [0,x], \beta^\lambda(s), \kappa^\lambda(s)\} \ud s
\\
  & \qquad  \qquad | X^\lambda(0) = 0, \beta^\lambda(0)=1,
  \kappa^\lambda(0)],
\end{align*}
indexed by $\s_\u \times \s$, and 
\begin{align*}
\widehat\Gamma_b^\lambda(x) & = \E[\int_0^{\delta_b^\lambda} \indic\{
X^\lambda(s) \in [0,x], \beta^\lambda(s), \kappa^\lambda(s)\} \ud s
\\
  & \qquad  \qquad | X^\lambda(0) = b, \beta^\lambda(0)=2,
  \kappa^\lambda(0)],
\end{align*}
indexed by $\s_\d \times \s$.  The matrix 
$\Gamma_0^\lambda(x)$ records the expected sojourn time of
$\{X^\lambda(t), \beta^\lambda(t),
\kappa^\lambda(t)\}$ in the interval $[0,x]$ during an interval of first return 
to~0, starting from 0 in a phase of $\s_\u$, while
$\widehat\Gamma_b^\lambda(x)$ corresponds to a first return to $b$, starting
from level $b$ in a phase of $\s_\d$.

We can show that
\begin{align}
   \label{e:gamma0}
\Gamma_0^\lambda(x) & = \int_0^x  e^{K^\lambda u} \ud u
\vligne{(C_\u^\lambda)^{-1} & \Psi^\lambda |C_\d^\lambda|^{-1}}
\\   \nonumber
  & = \FF(K^\lambda;x)
\vligne{(C_\u^\lambda)^{-1} & \Psi^\lambda |C_\d^\lambda|^{-1}},
\end{align}
where 
\begin{equation}
   \label{e:K}
K^\lambda = (C_\u^\lambda)^{-1}  T_{\u\u}^\lambda  + \Psi^\lambda
|C_\d^\lambda|^{-1} T_{\d\u}^\lambda
\end{equation}
and $\Psi^\lambda$, indexed by $\s_\u \times \s_\d$, is the matrix of
first return probability from level 0 back to level 0; it is the
minimal nonnegative solution of the Riccati equation
\begin{equation}
   \label{e:riccati}
		(C_{\u}^\lambda)^{-1} T_{\u\d}^\lambda + (C_{\u}^\lambda)^{-1} T_{\u\u}^\lambda \Psi^\lambda + \Psi^\lambda |C_{\d}^\lambda|^{-1} T_{\d\d}^\lambda + \Psi^\lambda |C_{\d}^\lambda|^{-1} T_{\d\u}^\lambda\Psi^\lambda  = 0.
\end{equation}
For details, we refer to Rogers~\cite{roger94} and Latouche and
Nguyen~\cite{ln13}.  We give in Appendix~\ref{s:gamma0} a technical demonstration of
(\ref{e:gamma0}); a simple
justification is that $(e^{K^\lambda u})_{s, s'}$ is, for 
any states $s$ and $s'$ in $\s_\u$, the expected number of crossings of level $u$
in phase $s'$ under taboo of the level 0, given that the process
$\{X^\lambda(t)\}$ starts in level 0 and phase $s$
(Ramaswami~\cite{ram99}).

If $\valpha\vmu < 0$, then all eigenvalues of $K^\lambda$ are in
$\C_{<0}$, the set of complex numbers with strictly negative real
part, and $K^\lambda$ is nonsingular.  If $\valpha\vmu \geq 0$, then
one eigenvalue is equal to 0, the others are in $C_{<0}$, and
$K^\lambda$ does not have an inverse.  Thus, the integral in
(\ref{e:gamma0}) takes different algebraic forms according to the
case.  

\begin{lem}
   \label{t:F}
If all the eigenvalues of the matrix $A$ are in $\C_{<0}$, then
\begin{equation}
   \label{e:FA}
\FF(A;x) = (-A)^{-1} (I - e^{Ax}).
\end{equation}
If $A$ has all its eigenvalues in $\C_{<0}$, with the exception of one
eigenvalue equal to 0, then
\begin{equation}
   \label{e:FAS}
\FF(A;x) = (-A^\#) (I - e^{Ax}) + x \vc v_a \vc u_a,
\end{equation}
where $\vc v_a$ and $\vc u_a$ are the right- and left-eigenvectors of
$A$ associated to the eigenvalue 0, and $A^\#$ is the group inverse of $A$.
\end{lem}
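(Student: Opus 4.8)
The plan is to evaluate the matrix integral $\FF(A;x)=\int_0^x e^{Au} \, \ud u$ directly, separating the two cases according to whether $A$ is invertible. In the first case, when all eigenvalues of $A$ lie in $\C_{<0}$, $A$ is nonsingular; since $\frac{\ud}{\ud u}\bigl(A^{-1}e^{Au}\bigr)=e^{Au}$, integrating from $0$ to $x$ yields $\FF(A;x)=A^{-1}(e^{Ax}-I)=(-A)^{-1}(I-e^{Ax})$, which is (\ref{e:FA}); only the invertibility of $A$ is used here.

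For the second case I would introduce a spectral splitting. Let $P=\vc v_a\vc u_a$, normalised so that $\vc u_a\vc v_a=1$, be the spectral projector of $A$ onto the one-dimensional null space belonging to the eigenvalue $0$. Standard properties of the group inverse (Campbell and Meyer~\cite{cm91}) then give $AP=PA=0$, $A^\# P=PA^\#=0$ and $AA^\#=A^\#A=I-P$. Since $AP=PA$, the projector $P$ commutes with $e^{Au}$, and because $A$ annihilates $\mathrm{Range}(P)$ one has $e^{Au}P=P$. I would then split
\[
\FF(A;x)=\int_0^x e^{Au}P \, \ud u+\int_0^x e^{Au}(I-P) \, \ud u=xP+\int_0^x e^{Au}(I-P) \, \ud u,
\]
and evaluate the remaining integral using $\frac{\ud}{\ud u}\bigl(A^\# e^{Au}(I-P)\bigr)=(I-P)e^{Au}(I-P)=e^{Au}(I-P)$, so that it equals $A^\#(e^{Ax}-I)(I-P)$. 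Since $A^\#(e^{Ax}-I)P=A^\#P-A^\#P=0$ (using $e^{Ax}P=P$ and $A^\#P=0$), this reduces to $A^\#(e^{Ax}-I)=(-A^\#)(I-e^{Ax})$, and collecting terms gives $\FF(A;x)=(-A^\#)(I-e^{Ax})+x\,\vc v_a\vc u_a$, which is (\ref{e:FAS}).

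The main obstacle is not analytic but organisational: I must be careful that the projector onto $\mathrm{Null}(A)$ is exactly $\vc v_a\vc u_a$ under the stated normalisation, and that $A^\#$ behaves as the genuine inverse of $A$ on $\mathrm{Range}(I-P)$ and as $0$ on $\mathrm{Range}(P)$; once these standard facts are in place the computation is entirely routine. As a shortcut, one could instead simply verify that both claimed formulas vanish at $x=0$ and have $x$-derivative $e^{Ax}$, and then invoke uniqueness of the solution of that matrix ODE.
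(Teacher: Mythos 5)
Your proposal is correct. The paper's own proof is a one-line verification: it simply checks that both sides of (\ref{e:FA}) and (\ref{e:FAS}) agree at $x=0$ and have the same $x$-derivative — precisely the ``shortcut'' you mention in your last sentence. Your main line of argument is instead a forward derivation: an explicit antiderivative in the nonsingular case, and in the singular case a splitting of the integrand along the spectral projector $P=\vc v_a\vc u_a$ together with the group-inverse identities $AA^{\#}=I-P$, $A^{\#}P=0$, $e^{Au}P=P$. The computations are all correct (including the reduction $A^{\#}(e^{Ax}-I)(I-P)=A^{\#}(e^{Ax}-I)$), and you rightly flag the normalisation $\vc u_a\vc v_a=1$, which the lemma leaves implicit. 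What the derivation buys over the paper's verification is that it explains where the $x\,\vc v_a\vc u_a$ term comes from (the integral of $e^{Au}$ over the null space of $A$) rather than pulling the formula out of thin air; what the verification buys is brevity, since differentiating $(-A^{\#})(I-e^{Ax})+x\vc v_a\vc u_a$ and using $A^{\#}Ae^{Ax}=(I-P)e^{Ax}=e^{Ax}-P$ settles the matter in one step. Either route is acceptable.
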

\begin{proof}
The proof is by verification that both sides
of (\ref{e:FA}) and of (\ref{e:FAS}) are equal for $x=0$ and have the
same derivative with respect to $x$.
\end{proof}

The matrix $\widehat \Gamma_b^\lambda(x)$ is given by 
\begin{align}
   \nonumber
\widehat\Gamma_b^\lambda(x) & = \int_{b-x}^b  e^{\widehat K^\lambda u} \ud u
\vligne{|C_\d^\lambda|^{-1}  & \widehat\Psi^\lambda
  (C_\u^\lambda)^{-1}}
 \\
   \label{e:gammab}
   & = e^{\widehat K^\lambda(b-x)} \FF(\widehat K^\lambda; x)
    \vligne{|C_\d^\lambda|^{-1} & \widehat\Psi^\lambda
      (C_\u^\lambda)^{-1}},
\end{align}
where 
\begin{equation}
   \label{e:Khat}
\widehat K^\lambda = |C_\d^\lambda|^{-1} T_{\d\d}^\lambda +  \widehat\Psi^\lambda
      (C_\u^\lambda)^{-1} T_{\u\d}^\lambda
\end{equation}
and $\widehat\Psi^\lambda$, indexed by $\s_\d \times \s$, is the
matrix of first return probability from level $b$ back to level $b$.
It is the minimal nonnegative solution of the equation
\begin{equation}
   \label{e:riccatihat}
|C_\d^\lambda|^{-1}  T_{\d\u}^\lambda  +
|C_\d^\lambda|^{-1}  T_{\d\d}^\lambda \widehat\Psi^\lambda +
\widehat\Psi^\lambda   (C_\u^\lambda)^{-1}  T_{\u\u}^\lambda +
\widehat\Psi^\lambda   (C_\u^\lambda)^{-1}  T_{\u\d}^\lambda
\widehat\Psi^\lambda =0.
\end{equation}
To prove (\ref{e:gammab}), we define the level-reversed process
$\{\widehat X^\lambda(t), \beta^\lambda(t), \kappa^\lambda(t)\}$ with fluid
rate vector $\widehat \vmu = - \vmu$, and we observe that the time
spent in $[0,x]$ by the process $\{ X^\lambda(t), \beta^\lambda(t),
\kappa^\lambda(t)\}$ during an interval of first return to $b$ has the same distribution as the time spent in
$[b-x, b]$ by the level-reversed process during an interval of first
return to 0.

We note for future reference that the eigenvalues of $\widehat
H^\lambda$ are in $\C_{<0}$ if $\valpha \vmu > 0$, otherwise the
matrix has one eigenvalue equal to 0, with the others in $\C_{<0}$.

\paragraph{Step C.}

In the third step, we establish a relation between $(N_0^\lambda(x),
N_b^\lambda(x))$ and $(\Gamma_0^\lambda(x),
\widehat\Gamma_b^\lambda(x))$, which leads us to an expression for the
matrices $M_{0;\u}^\lambda(x)$ and $M_{b;\d}^\lambda(x)$ as functions
of $K^\lambda$, $\widehat K^\lambda$, $\Psi^\lambda$, and
$\widehat \Psi^\lambda$.

\begin{lem} 
		 	\label{lem:N}
The matrix $N^{\lambda}(x)$ of mean sojourn times in $(0,x)$ during the
interval $(0, \delta_0^\lambda \wedge \delta_b^\lambda)$ is a
solution of the system  
\begin{align}
		 	\label{eqn:N-sol}
\vligne{ I & e^{K^\lambda b} \Psi^\lambda  \\ 
     e^{\widehat{K}^\lambda b}\widehat{\Psi}^\lambda  & I}
\vligne{N_0^{\lambda}(x)  \\ N_b^{\lambda}(x) }
= 
\vligne{\Gamma_0^{\lambda}(x) \\ \widehat{\Gamma}_b^{\lambda}(x) }.
\end{align}
where  $K^\lambda$ and $\widehat K^\lambda$ are given in (\ref{e:K})
and (\ref{e:Khat}).
\end{lem}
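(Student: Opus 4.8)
The plan is to obtain the system (\ref{eqn:N-sol}) by a first-passage decomposition of the unregulated-process quantities $\Gamma_0^\lambda(x)$ and $\widehat\Gamma_b^\lambda(x)$ according to whether the relevant return to a boundary happens before or after a visit to the \emph{other} boundary. Concretely, consider the excursion of $\{X^\lambda(t)\}$ from level $0$ back to level $0$, starting in a phase of $\s_\u$; its expected sojourn time in $[0,x]$ is $\Gamma_0^\lambda(x)$. I would split this excursion at the event $\delta_0^\lambda \wedge \delta_b^\lambda$: either level $0$ is reached before level $b$, which contributes exactly $N_0^\lambda(x)$, or level $b$ is reached first, an event happening in phase $\s_\u$ with probability matrix $\Lambda_b^\lambda$ (for the truncated-at-$b$ process) — but more to the point, the time accrued in $[0,x]$ up to $\delta_0^\lambda \wedge \delta_b^\lambda$ is $N_0^\lambda(x)$ regardless, and then, \emph{on the event that $b$ was hit first}, the process continues from level $b$ and must eventually come back to $0$, accruing further time in $[0,x]$.

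The key move is to identify the distribution of the phase at the first hit of $b$ (before returning to $0$) and the subsequent contribution. Here I would use the "expected number of up-crossings" interpretation already invoked for (\ref{e:gamma0}): the probability of reaching level $b$ from level $0$ in a phase of $\s_\u$, under taboo of level $0$, together with the sojourn structure above level $x$, is governed by $e^{K^\lambda b}$ and then by $\Psi^\lambda$ for the portion of the return trip that dips back below (the factor $\Psi^\lambda$ accounting for the descent from $b$). Thus the "feedback" term coupling $\Gamma_0^\lambda(x)$ to the excursion \emph{from} $b$ is $e^{K^\lambda b}\Psi^\lambda$ multiplying $N_b^\lambda(x)$, giving the first block row
\[
\Gamma_0^\lambda(x) = N_0^\lambda(x) + e^{K^\lambda b}\Psi^\lambda\, N_b^\lambda(x).
\]
By the level-reversal argument used for (\ref{e:gammab}) — sending $\vmu \mapsto -\vmu$, swapping the roles of $0$ and $b$, of $K^\lambda$ and $\widehat K^\lambda$, and of $\Psi^\lambda$ and $\widehat\Psi^\lambda$ — the symmetric decomposition yields the second block row
\[
\widehat\Gamma_b^\lambda(x) = e^{\widehat K^\lambda b}\widehat\Psi^\lambda\, N_0^\lambda(x) + N_b^\lambda(x),
\]
and stacking these two identities is precisely (\ref{eqn:N-sol}).

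The main obstacle I anticipate is making the coupling factor $e^{K^\lambda b}\Psi^\lambda$ rigorous: one must argue carefully that, starting from $0$ in $\s_\u$ and conditioning on hitting $b$ before returning to $0$, the joint law of the hitting phase and of the time spent in $[0,x]$ on the \emph{return leg from $b$} factors as claimed — i.e., that the "expected number of visits to level $b$ under taboo of $0$" is exactly $e^{K^\lambda b}$ (Ramaswami's/Rogers' identity) and that the return from $b$ onward is, in distribution, an excursion of the $\widehat\Psi^\lambda$-type whose $[0,x]$-sojourn matrix is $N_b^\lambda(x)$, with the intermediate descent below $b$ captured by the single multiplication by $\Psi^\lambda$. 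This requires invoking the strong Markov property at $\delta_b^\lambda$ and the standard first-return/taboo-process machinery for fluid queues (Rogers~\cite{roger94}, Ramaswami~\cite{ram99}, Latouche and Nguyen~\cite{ln13}); once that bookkeeping is set up, verifying that the matrix products land in the right index sets $\s_\u$, $\s_\d$, $\s$ is routine, and the two block equations assemble immediately into the stated linear system.
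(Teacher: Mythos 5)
Your proposal is correct and follows essentially the same route as the paper: the paper likewise decomposes the free excursion from $0$ back to $0$ into the piece up to $\delta_0^\lambda\wedge\delta_b^\lambda$ (giving $N_0^\lambda(x)$) plus the contribution of the return leg from $b$, and identifies the coupling coefficient $e^{K^\lambda b}\Psi^\lambda$ by recognizing $\Lambda_b(I-\Psi^\lambda\widehat\Psi_b)^{-1}$ as the matrix of expected visits to level $b$ under taboo of $0$, i.e.\ $e^{K^\lambda b}$ by Ramaswami's identity, with the second block row obtained by the symmetric (level-reversed) argument. The only cosmetic difference is that the paper routes the bookkeeping through an intermediate quantity $\Gamma^*(x)$ and a geometric series, whereas you sum that series at the outset.
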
 

\begin{proof} 
	The proof is similar to that of \cite[Lemma~4.1]{dssl03} and
        we give below its general outline only. First, observe that 
\begin{align*}
	\Gamma_0^{\lambda}(x) = N_0^{\lambda}(x) + \Lambda_b \Gamma^*(x),  
\end{align*} 
where $\Gamma^*(x)$ is the matrix of mean sojourn time in the interval
$(0,x)$ before the unregulated process $X^\lambda(t)$ first returns to
level $0$, starting from level $b$ in a phase of  $\s_\u$. Also, 
\begin{align*}
  \Gamma^*(x) & = \Psi^\lambda (N_b^{\lambda}(x) + \widehat \Psi_b
  \Gamma^*(x)) = (I - \Psi^\lambda \widehat \Psi_b)^{-1} \Psi^\lambda
  N_b^{\lambda}(x),
\end{align*}
and thus 
\begin{align*} 
	\Gamma_0^{\lambda}(x) = N_0^{\lambda}(x) + \Lambda_b   (I -
        \Psi^\lambda \widehat  \Psi_b)^{-1} \Psi^\lambda N_b^{\lambda}(x). 
\end{align*}
Now, we recognise that
	\begin{align*} 
          \Lambda_b (I - \Psi^\lambda \widehat \Psi_b)^{-1} =
          \Lambda_b (I + \Psi^\lambda \widehat \Psi_b + (\Psi^\lambda
          \widehat \Psi_b)^2 + (\Psi^\lambda \widehat \Psi_b)^3 +
          \cdots)
	\end{align*}
is the matrix of expected number of visits to level $b$ in a phase of
$\s_\u$, starting from $0$, before the first return to level 0,
and is thus equal to $\ue^{Kb}$ (Ramaswami~\cite{ram99}).  This gives the first equation in
(\ref{eqn:N-sol}); the second is similarly proved.
	\end{proof} 	
	
\begin{rem} \em
   \label{r:nonzero}
   By \cite[Lemma 4.2]{dssl03}, the coefficient matrix in
   (\ref{eqn:N-sol}) is nonsingular if $\valpha \vmu \not= 0$, and so
   Lemma~\ref{lem:N} completely characterizes $N^{\lambda}(x)$ for
   fluid processes with non-zero mean drift.
\end{rem}

% \paragraph{Step D.}

% We may now proceed with the proof of (\ref{e:mofxb}). \newline%

\begin{proofbis}{Theorem~\ref{t:expected}} 
By \cite[Lemmas 5.1 and 5.2]{ln14}, we have
\begin{align*}
\Psi^\lambda & = I + O({1}/{\sqrt\lambda}),
&\widehat\Psi^\lambda & = I + O({1}/{\sqrt\lambda}),
\\
K^\lambda & = K + O({1}/{\sqrt\lambda}),
&\widehat K^\lambda & = \widehat K + O({1}/{\sqrt\lambda})
\end{align*}
and so, by (\ref{e:gamma0}, \ref{e:gammab}), 
\[
\vligne{\Gamma_0^\lambda(x) \\ \widehat\Gamma_b^\lambda(x)}
=
\frac{1}{\sqrt\lambda}  \vligne{\FF(K;x) &  \\  & e^{\widehat K(b-x)} \FF(\widehat
  K;x)}
\vligne{\Delta_\sigma^{-1} & \\ & \Delta_\sigma^{-1}} \vligne{I & I \\
  I & I}   
+O(\frac{1}{\lambda})
\]
since both $C_\u^\lambda = |C_\d^\lambda| = \sqrt\lambda \Delta_\sigma + O(1)$.  
Therefore, (\ref{eqn:N-sol}) becomes
\begin{align}
  \nonumber
&\vligne{I & e^{K b} \\ e^{\widehat K b} & I} 
\vligne{N_0^\lambda(x) \\ N_b^\lambda(x)} \\
   \label{e:p4}
& \qquad = 
\frac{1}{\sqrt\lambda}  \vligne{\FF(K;x) \Delta_\sigma^{-1} &  \\  & e^{\widehat K(b-x)} \FF(\widehat K;x) \Delta_\sigma^{-1} }
\vligne{I & I \\ I & I} +O(\frac{1}{\lambda}).
\end{align}
By \cite[Lemma 3.6]{ln13},
\begin{itemize}
\item 
if $\valpha\vmu < 0$, then $K$ has all eigenvalues in $\C_{<0}$ and
$\widehat K$ has $m-1$ eigenvalues in $\C_{<0}$ and one eigenvalue
equal to 0,
\item 
if $\valpha\vmu > 0$, then $\widehat K$ has all eigenvalues in $\C_{<0}$ and
$K$ has $m-1$ eigenvalues in $\C_{<0}$ and one eigenvalue
equal to 0.
\end{itemize}
This entails that one of the two matrices $e^{Kb}$ and $e^{\widehat K
  b}$ has spectral radius equal to one, with the other having spectral
radius strictly less than one, so that the left-most matrix in (\ref{e:p4})
is nonsingular when $\valpha\vmu \not=0$, and 
\begin{align}
  \nonumber
\vligne{N_0^\lambda(x) \\ N_b^\lambda(x)}
 =  &
\frac{1}{\sqrt\lambda} 
\vligne{I & e^{K b} \\ e^{\widehat K b} & I}^{-1}
\\
   \label{e:p1}
& \vligne{\FF(K;x) \Delta_\sigma^{-1} &  \\  & e^{\widehat K(b-x)} \FF(\widehat
  K;x) \Delta_\sigma^{-1} }
\vligne{I & I \\ I & I} +O(\frac{1}{\lambda}).
\end{align}
On the other hand, we have $(I - \Psi_b ^\lambda)^{-1} = \sqrt\lambda
(-\Pid)^{-1} +O(1)$ by \cite[Lemma 5.5]{ln14} and
$(T_{\d\d}^\lambda)^{-1} = O(1/\lambda)$, $(-T_{\d\d}^\lambda)^{-1}
T_{\d\u}^\lambda = I + O(1/\lambda)$, by definition of $T^\lambda$.
Thus, (\ref{e:molamA}) may be written as
\begin{equation}
   \label{e:p2}
M_{0;\u}^\lambda(x) = (\sqrt\lambda (-\Pid)^{-1} + O(1)) N_0^\lambda(x),
\end{equation}
and similarly,
\begin{equation}
   \label{e:p3}
M_{b;\d}^\lambda(x) = (\sqrt\lambda (-\hPid)^{-1} + O(1)) N_b^\lambda(x).
\end{equation}
Equation (\ref{e:mofxb}) directly follows from (\ref{e:mofx},
\ref{e:p1}, \ref{e:p2}, \ref{e:p3}).
\end{proofbis}

It will be useful in Section~\ref{s:sd} to have separate expressions
for $\MM_0(x)$ and $\MM_b(x)$.  Using
\[
\vligne{I & e^{Kb} \\ e^{\widehat K b} & I}^{-1} =
\vligne{(I-e^{Kb} e^{\widehat K b})^{-1} & \\ & (I-e^{\widehat Kb} e^{K b})^{-1} }
\vligne{I & -e^{Kb} \\- e^{\widehat K b} & I},
\]
we easily replace (\ref{e:mofxb}) by the pair of equations
\begin{align}
   \label{e:mo}
\hspace*{-0.4cm} \MM_0(x) & = 2 (-P_b)^{-1} (I-e^{Kb} e^{\widehat K b})^{-1}  (\FF(K;x)
- e^{Kb} e^{\widehat K (b-x)} \FF(\widehat K; x)) \Delta_\sigma^{-1},
\\
   \label{e:mb}
\hspace*{-0.4cm} \MM_b(x) & = 2 (-\widehat P_b)^{-1} (I-e^{\widehat Kb} e^{K b})^{-1}
e^{\widehat K(b-x)}(\FF( \widehat K;x)
- e^{\widehat Kx} \FF(K; x)) \Delta_\sigma^{-1}.
\end{align}
The next corollary is
obvious: we merely let $x= b$ in (\ref{e:mo}, \ref{e:mb}).

\begin{cor}
   \label{t:phases}
If $\valpha \vmu \not= 0$, then the expected time spent in the various
phases during an excursion from one boundary to the other is given by
\begin{align*}
\MM_0(b) & = 2 (-P_b)^{-1} (I-e^{Kb} e^{\widehat K b})^{-1}  (\FF(K,b)
- e^{Kb} \FF(\widehat K; b)) \Delta_\sigma^{-1}
\\
\MM_b(b) & = 2 (-\widehat P_b)^{-1} (I-e^{\widehat Kb} e^{K b})^{-1}
(\FF( \widehat K,b) - e^{\widehat Kb} \FF(K; b)) \Delta_\sigma^{-1}.
\end{align*}
\qed
\end{cor}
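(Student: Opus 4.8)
The plan is simply to specialize the pair of identities (\ref{e:mo})--(\ref{e:mb}) to the value $x=b$. First I would recall the probabilistic content of the two matrices on the left: by the definition of $\MM_0(x)$ given at the start of Section~\ref{s:fluidtime}, the entry $(\MM_0(x))_{ij}$ is the expected time the regulated process $\{Z(t),\kappa(t)\}$ spends in the set $[0,x]\times\{j\}$ during an excursion from level $0$ to level $b$. Taking $x=b$ makes $[0,x]=[0,b]$ the entire strip, so $(\MM_0(b))_{ij}$ is exactly the expected \emph{total} sojourn time in phase $j$ during that excursion; symmetrically $(\MM_b(b))_{ij}$ is the expected total time in phase $j$ during an excursion from $b$ to $0$. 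Hence the left-hand sides in the statement are the quantities claimed, and it remains only to evaluate the right-hand sides of (\ref{e:mo})--(\ref{e:mb}) at $x=b$.

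Carrying out the substitution: in (\ref{e:mo}), setting $x=b$ replaces $e^{\widehat K(b-x)}$ by $e^{\widehat K\cdot 0}=I$ and $\FF(\widehat K;x)$ by $\FF(\widehat K;b)$, so the bracketed term $\FF(K;x)-e^{Kb}e^{\widehat K(b-x)}\FF(\widehat K;x)$ collapses to $\FF(K;b)-e^{Kb}\FF(\widehat K;b)$, while the prefactor $2(-P_b)^{-1}(I-e^{Kb}e^{\widehat Kb})^{-1}$ and the postfactor $\Delta_\sigma^{-1}$ are unaffected; this gives the first displayed formula. In the same way, setting $x=b$ in (\ref{e:mb}) turns $e^{\widehat K(b-x)}$ into $I$ and $e^{\widehat Kx}\FF(K;x)$ into $e^{\widehat Kb}\FF(K;b)$, so the bracket becomes $\FF(\widehat K;b)-e^{\widehat Kb}\FF(K;b)$ and the remaining factors $2(-\widehat P_b)^{-1}(I-e^{\widehat Kb}e^{Kb})^{-1}$ and $\Delta_\sigma^{-1}$ carry over verbatim, yielding the second formula. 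Since (\ref{e:mo})--(\ref{e:mb}) were derived from (\ref{e:mofxb}) under the standing hypothesis $\valpha\vmu\neq 0$ — which, via the spectral-radius dichotomy for $e^{Kb}$ and $e^{\widehat Kb}$ recorded in the proof of Theorem~\ref{t:expected}, guarantees invertibility of $I-e^{Kb}e^{\widehat Kb}$ and of $I-e^{\widehat Kb}e^{Kb}$ — no additional regularity needs to be checked.

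There is essentially no obstacle to overcome: the corollary is a pure specialization, and the one remark I would keep explicit is the identification of $\MM_0(b)$ and $\MM_b(b)$ with the matrices of expected time-in-phase over a complete excursion, since that is what makes setting $x=b$ meaningful and is exactly the form in which these quantities feed into Theorem~\ref{t:piofx}. If desired, one could cross-check the result by verifying that the relevant weighted combination of $\MM_0(b)\vone$ and $\MM_b(b)\vone$ reproduces the mean excursion lengths computed independently, but this is not required for the proof.
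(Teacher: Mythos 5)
Your proposal is correct and is exactly the paper's argument: the authors likewise obtain the corollary by setting $x=b$ in (\ref{e:mo}) and (\ref{e:mb}), noting only that the result is then immediate. Your additional remarks on the probabilistic meaning of $\MM_0(b)$ and $\MM_b(b)$ and on the invertibility guaranteed by $\valpha\vmu\neq 0$ are consistent with what is already established in the proof of Theorem~\ref{t:expected}.
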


\begin{rem} \em
In marked contrast to Theorem~\ref{t:probaup} and its
Corollary~\ref{t:probadown}, Theorem~\ref{t:expected} does not give an
expression for $\MM_0(x)$ and $\MM_b(x)$ if $\valpha \vmu = 0$.
In that case, we might write,
instead of (\ref{e:mofxb}), that $\MM(x)$ is to
be determined by solving the system
\[
\vligne{I  &  e^{Kb} \\  e^{\widehat Kb}  &  I}
\vligne{\Pid  \MM_0(x)  \\ \hPid \MM_b(x) }
= -2 \vligne{\FF(K;x) \\  e^{\widehat K(b-x)} \FF(\widehat K; x)} \Delta_\sigma^{-1},
\]
plus some additional equation.  Unfortunately, this additional
equation has eluded us so far.
\end{rem}

%======================================
\section{Stationary distribution of a flexible MMBM}
\label{s:sd}

We have now obtained all the ingredients necessary to express the
stationary distribution of the flexible MMBM $\{Y(t), \rho(t)\}$ once
we specify its parameters.  

It is natural to expect {\em some} of the parameters at least to
take different values during the two legs of a regeneration cycle,
from level 0 to level $b$ and back.  We assume that the set $\EE$ of phases
is made up of two subsets, $\EE_u$ and $\EE_d$, and that the generator
of $\{\rho(t)\}$ is $Q$ partitioned as follows:
\begin{equation}
   \label{e:qstar}
Q = \vligne{Q_u & 0 \\ 0 & Q_d}.
\end{equation}
The idea is that $Q_u$, on the state space $\EE_u$, describes the
evolution of the Markov environment during the up-leg, after a
regeneration at level 0 until the next regeneration at level $b$;
$Q_d$ on the state space $\EE_d$ controls the system during a
down-leg, from $b$ to 0.

The other parameters are similarly partitioned and we write $\vmu =
\vligne{\vmu_u & \vmu_d}$ and $\vsigma = \vligne{\vsigma_u &
  \vsigma_d}$.  The matrices $P^\circ$ and $P^\bullet$ control the
transition from $\EE_d$ to $\EE_u$ upon hitting level 0 at the end of
a down-leg, and from $\EE_u$ to $\EE_d$ upon hitting $b$, and we write
them as
\[
P^\circ = \vligne{I & 0 \\ P_{du}^\circ & 0}
\qquad \mathrm{and} \qquad
P^\bullet = \vligne{0 & P_{ud}^\bullet \\ 0 & I}.
\]
The identity blocks on the diagonal do not play any role in the
calculation to follow, their role is to ensure that $P^\circ$ and
$P^\bullet$ are stochastic matrices.
We assume that $Q_u$ and $Q_d$ are irreducible, and so
Assumption~\ref{a:irreducible} is satisfied.

Upon hitting 0 at a regeneration point and after choosing a new phase
with the matrix $P^\circ$, the phase $\rho$ is in $\EE_u$.  Therefore,
the vector $\vnu_0$ takes the form $\vnu_0 = \vligne{\vnu_{0;u} &
  \vzero}$ and, for similar reasons, we have $\vnu_b = \vligne{\vzero
  & \vnu_{b;u}}$.

As transitions from $\EE_u$ to $\EE_d$ or from $\EE_d$ to $\EE_u$ are
possible at regeneration points only, the matrices of first passage
probabilities from one level to the other have the structure
\[
H_0 = \vligne{H_{0;u} & 0 \\ 0 & H_{0;d}}
\qquad \mathrm{and} \qquad
H_b = \vligne{H_{b;u} & 0 \\ 0 & H_{b;d}}
\]
The only blocks that one
needs to evaluate, however, are $H_{0;u}$ and $H_{b;d}$: the value of
$H_{0;d}$ is irrelevant as the process cannot leave level 0 in a phase
of $\EE_d$ and  $H_{b;u}$ is irrelevant as well, for a similar reason.

Obviously, the matrices $M_0(x)$ and $M_b(x)$ have the same structure
\[
M_0(x) = \vligne{M_{0;u}(x) & 0 \\ 0 & M_{0;d}(x)}
\qquad  \mathrm{and} \qquad
M_b(x) = \vligne{M_{b;u}(x) & 0 \\ 0 & M_{b;d}(x)},
\]
and we do not need to evaluate $M_{0;d}(x)$ or $M_{b;d}(x)$.

Altogether, we may re-formulate Theorem \ref{t:piofx} in a more
detailed manner as follows, using Theorem \ref{t:probaup}, Corollary
\ref{t:probadown}, and equations (\ref{e:mo}, \ref{e:mb}).

\begin{thm}
   \label{t:piofxB}
The stationary distribution $\Pi(x)$ of the flexible Markov modulated
Brownian motion is given by 
$
\Pi(x) = (\vnu^* \,   \vm^*)^{-1} \vnu^* M^*(x).
$
The vector $\vnu^*$ is partitioned as  $\vnu^* = \vligne{\vnu_{0;u} & \vnu_{b;d}}$ where 
\[
\vnu_{0;u} = \vnu_{0;u}   H_{0;u} P_{ud}^\bullet H_{b;d} P_{du}^\circ,  
\qquad 
\vnu_{b;d} = \vnu_{0;u} H_{0;u} P_{ud}^\bullet,
\]
with
\[
H_{0;u}  = \left.  \HH_0\right|_{Q=Q_u, \vmu = \vmu_u, \vsigma =
  \vsigma_u}
\qquad \mathrm{and} \qquad
H_{b;d}  = \left.  \HH_b\right|_{Q=Q_d, \vmu = \vmu_d, \vsigma =
  \vsigma_d}.
\]
The matrix $M^*(x)$ is partitioned as
\[
M^*(x)= \vligne{M_{0;u}(x) & \\ & M_{b;d}(x)},
\]
with
\[
M_{0;u}(x)  = \left. \MM_0(x)\right|_{Q=Q_u, \vmu = \vmu_u, \vsigma = \vsigma_u}
\   \mathrm{and} \ 
M_{b;d}(x) = \left. \MM_b(x)\right|_{Q=Q_d, \vmu = \vmu_d, \vsigma =
  \vsigma_d}.
\]
The vector $\vm^*$ is given by $\vm^* = M^*(b) \vone$.
\qed
\end{thm}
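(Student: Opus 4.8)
The plan is to specialize Theorem~\ref{t:piofx} to the block-structured setting introduced in this section, so the argument is essentially bookkeeping with block matrices. First I would recall the general formula $\Pi(x) = (\vnu\,\vm)^{-1} \vnu M(x)$ with $\vnu_0 = \vnu_0 H_0 P^\bullet H_b P^\circ$, $\vnu_b = \vnu_0 H_0 P^\bullet$, $M(x) = \vligne{M_0(x) \\ M_b(x)}$, and $\vm = M(b)\vone$, and then substitute the partitioned forms of $Q$, $\vmu$, $\vsigma$, $P^\circ$, $P^\bullet$, $H_0$, $H_b$, $M_0(x)$, $M_b(x)$ displayed just above the statement. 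The key observation, already noted in the text, is that $\vnu_0 = \vligne{\vnu_{0;u} & \vzero}$ and $\vnu_b = \vligne{\vzero & \vnu_{b;d}}$ because a freshly chosen phase after a regeneration at $0$ lies in $\EE_u$ and after a regeneration at $b$ lies in $\EE_d$; this kills all the irrelevant blocks.

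Next I would carry out the block multiplications explicitly. For the first relation, $\vnu_0 H_0 = \vligne{\vnu_{0;u} H_{0;u} & \vzero}$, then multiplying by $P^\bullet = \vligne{0 & P_{ud}^\bullet \\ 0 & I}$ gives $\vligne{\vzero & \vnu_{0;u} H_{0;u} P_{ud}^\bullet}$, then by $H_b = \vligne{H_{b;u} & 0 \\ 0 & H_{b;d}}$ gives $\vligne{\vzero & \vnu_{0;u} H_{0;u} P_{ud}^\bullet H_{b;d}}$, and finally by $P^\circ = \vligne{I & 0 \\ P_{du}^\circ & 0}$ gives $\vligne{\vnu_{0;u} H_{0;u} P_{ud}^\bullet H_{b;d} P_{du}^\circ & \vzero}$, which matches the claimed fixed-point equation for $\vnu_{0;u}$. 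The same chain, stopped one step earlier, yields $\vnu_b = \vligne{\vzero & \vnu_{0;u} H_{0;u} P_{ud}^\bullet}$, i.e.\ $\vnu_{b;d} = \vnu_{0;u} H_{0;u} P_{ud}^\bullet$. I would then note that since only the blocks $H_{0;u}$ and $H_{b;d}$ survive these products, only those need to be computed, and by construction of the flexible process the up-leg is governed by $(Q_u, \vmu_u, \vsigma_u)$ and the down-leg by $(Q_d, \vmu_d, \vsigma_d)$, so $H_{0;u}$ and $H_{b;d}$ are the matrices $\HH_0$ and $\HH_b$ of Theorem~\ref{t:probaup} and Corollary~\ref{t:probadown} evaluated with those parameters.

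For the numerator $\vnu M(x)$, I would use that $\vnu = \vligne{\vnu_{0;u} & \vzero & \vzero & \vnu_{b;d}}$ in the full $2m$-coordinate layout (ordering the phases as $\EE_u$ then $\EE_d$ for each of the two boundary copies), and that $M_0(x)$, $M_b(x)$ are block-diagonal with blocks $M_{0;u}(x), M_{0;d}(x)$ and $M_{b;u}(x), M_{b;d}(x)$ respectively; hence $\vnu M(x) = \vligne{\vnu_{0;u} M_{0;u}(x) & \vnu_{b;d} M_{b;d}(x)}$, which is exactly $\vnu^* M^*(x)$ with $\vnu^* = \vligne{\vnu_{0;u} & \vnu_{b;d}}$ and $M^*(x) = \mathrm{diag}(M_{0;u}(x), M_{b;d}(x))$. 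Then $\vm = M(b)\vone$ collapses to $\vligne{\vnu_{0;u}\text{-relevant part} }$; more precisely $\vnu\vm$ equals $\vnu^* \vm^*$ with $\vm^* = M^*(b)\vone$, since the $\EE_d$-block of $M_0$ and the $\EE_u$-block of $M_b$ are multiplied by the zero parts of $\vnu$. Assembling these gives $\Pi(x) = (\vnu^*\vm^*)^{-1}\vnu^* M^*(x)$, and the identification of $M_{0;u}(x)$ and $M_{b;d}(x)$ via equations~(\ref{e:mo}) and~(\ref{e:mb}) with the restricted parameters completes the proof.

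I do not anticipate a genuine obstacle here: the content is entirely the verification that the block structure imposed on $Q, P^\circ, P^\bullet$ propagates consistently through $H_0, H_b, M_0(x), M_b(x)$ and annihilates the unneeded blocks. The one point requiring a word of care is the justification that $H_{0;u}$ and $H_{b;d}$ really coincide with $\HH_0, \HH_b$ under the parameter restriction --- this rests on the pathwise construction in Section~\ref{s:bm}, where each regeneration interval is an independent copy of a regulated MMBM whose parameters, on the up-leg, are precisely $(Q_u, \vmu_u, \vsigma_u)$ (the $\EE_d$ coordinates being unreachable during an up-leg), and symmetrically for the down-leg. Since Assumption~\ref{a:irreducible} is guaranteed by the stated irreducibility of $Q_u$ and $Q_d$, the uniqueness of $\vnu_{0;u}$ up to scaling follows from Theorem~\ref{t:piofx} as before, and no new estimate is needed.
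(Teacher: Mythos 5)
Your proposal is correct and follows exactly the route the paper intends: the paper states Theorem~\ref{t:piofxB} with no separate proof because it regards it as the immediate specialization of Theorem~\ref{t:piofx} to the block structures of $Q$, $P^\circ$, $P^\bullet$, $H_0$, $H_b$, $M_0(x)$, $M_b(x)$ set out just before the statement, which is precisely the bookkeeping you carry out. Your explicit block multiplications and the remark identifying $H_{0;u}$, $H_{b;d}$, $M_{0;u}(x)$, $M_{b;d}(x)$ with the parameter-restricted quantities from Theorem~\ref{t:probaup}, Corollary~\ref{t:probadown} and equations~(\ref{e:mo})--(\ref{e:mb}) supply exactly the details the paper leaves implicit.
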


%======================================
\section{Illustration}
\label{s:illustration}

\begin{exemple} \rm {\bf  Single-phase Brownian motion.}
   \label{ex:BM}
This is the example given in the introduction: the environmental
process has only one phase, possibly characterized by different
parameters in alternating intervals between regeneration points.  With
one phase only, the calculations simplify considerably; if $\mu$ is
negative, then $U= \widehat K=0$ and $\widehat U = K = 2
\mu/\sigma^2$.  Assuming that both $\mu_u$ and $\mu_d$ are negative, we obtain
from Theorem \ref{t:piofxB} that
\begin{align*} 
	\Pi(x)= \frac{\MM_{0;u}(x) + \MM_{b;d}(x)}{\MM_{0;u}(b) + \MM_{b;d}(b)},
\end{align*}
with
%============================
\begin{figure}[t]
\centering{
\includegraphics[width=0.4\textwidth,height=0.4\textwidth]{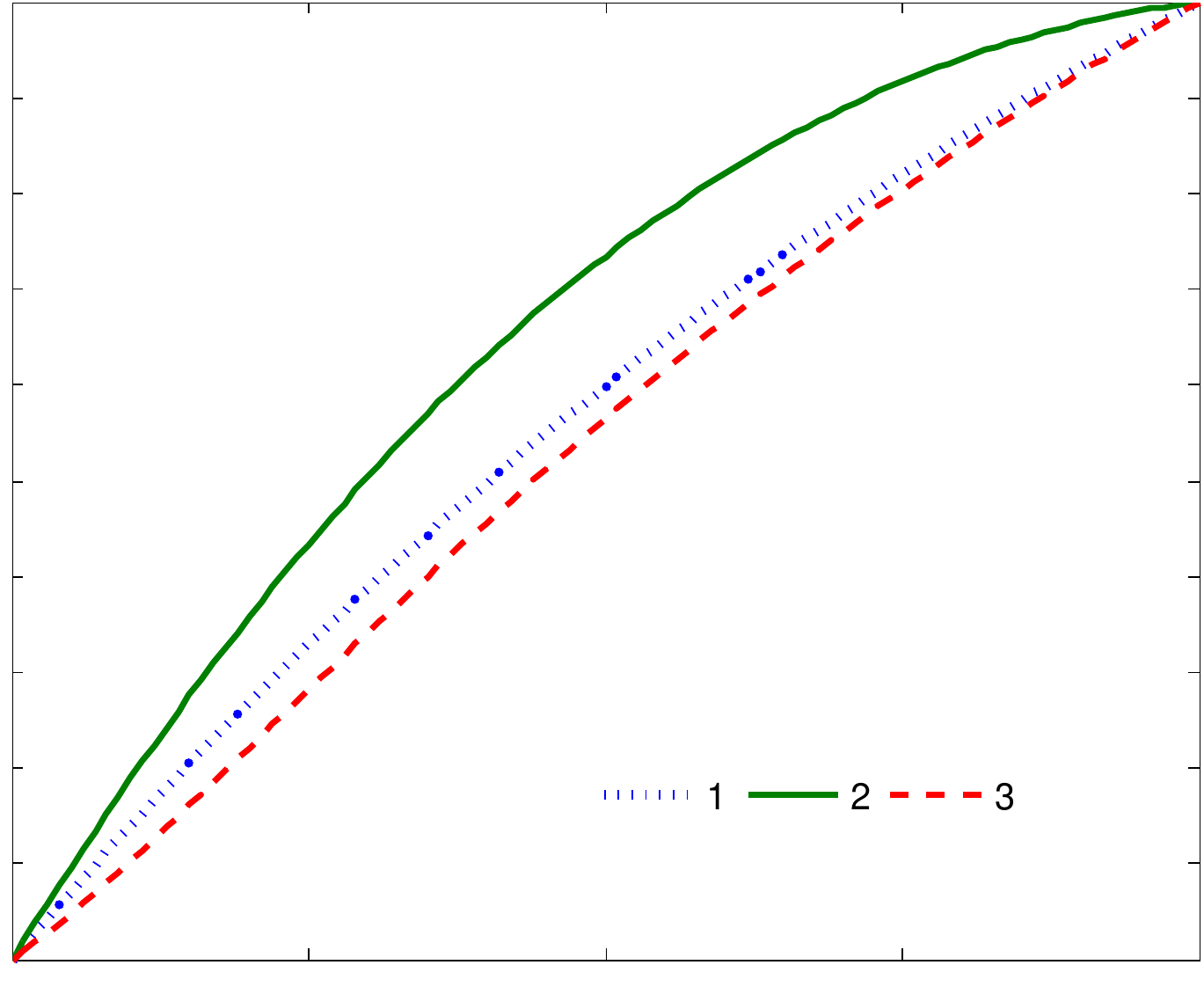} 
\put(0,-6){\makebox(0,0)[c]{$4$}}
\put(-153,-6){\makebox(0,0)[c]{$0$}}
\put(-163,5){\makebox(0,0)[c]{$0$}}
\put(-163,160){\makebox(0,0)[c]{$1$}}
\hspace{0.1\textwidth}
\raisebox{-0.01\textwidth}{%
\includegraphics[width=0.41\textwidth,height=0.41\textwidth]{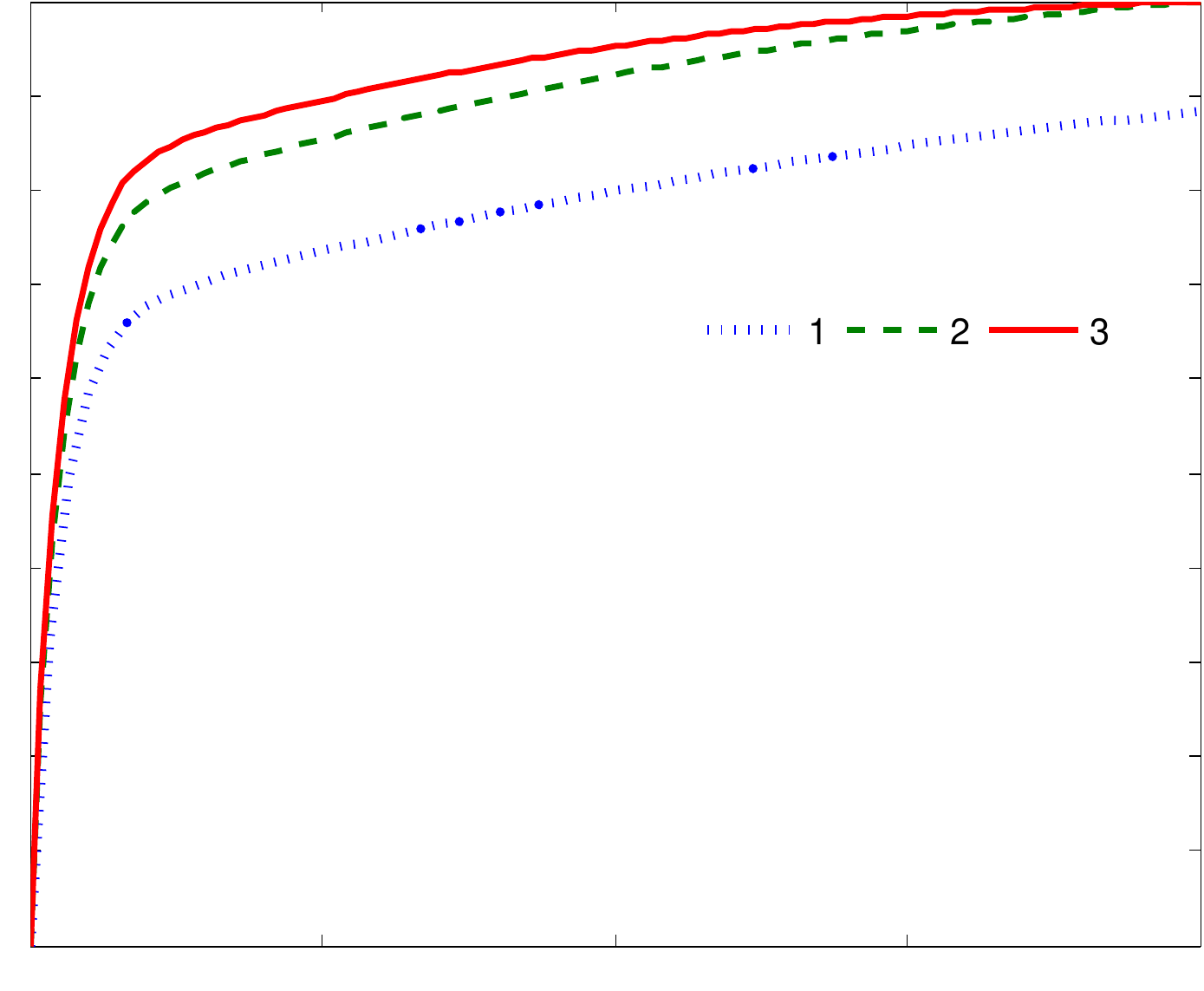} 
\put(0,-6){\makebox(0,0)[c]{$20$}}
\put(-153,-6){\makebox(0,0)[c]{$0$}}
\put(-163,5){\makebox(0,0)[c]{$0$}}
\put(-163,160){\makebox(0,0)[c]{$1$}}
}
\caption{\label{f:cdfcyclic}
Cumulative stationary distribution functions for the
single-phase BM examples (left) and the
cyclic-phases examples (right). The parameters are clarified in the text.}
}
\end{figure}
%============================
\[
\MM_{0;u}(x) + \MM_{b;d}(x) = \left(\frac{1}{\mu_d}-\frac{1}{\mu_u}\right) x
+\frac{\sigma_d^2}{2\mu_d^2} (1-e^{2 \mu_d x /\sigma_d^2}) 
- \frac{\sigma_u^2 }{2\mu_u^2} e^{-2 \mu_u b /\sigma_u^2} (1-e^{2 \mu_u x /\sigma_u^2}).
\]
If the parameters in the two types of intervals are equal, then
further simplifications yield the well-known truncated exponential
distribution 
	\begin{align*} 
		\Pi(x) = (1-e^{2 \mu x /\sigma^2}) (1-e^{2 \mu b /\sigma^2})^{-1}.
	\end{align*}

The parameters for the three distributions shown on the left of Figure \ref{f:cdfcyclic} are given in the table below.
\begin{center}
  \begin{tabular}{cc|cccc}
    Case & line &$\mu_u$ & $\sigma_u^2$ & $\mu_d$ & $\sigma_d^2$  \\
    \hline
  1 & dotted & -1 & 10 & -1 & 10  \\
 2 & plain  & -1 & 10 & -10 & 10  \\
 3 & dashed  & -1 & 10 & -1 & 1 
  \end{tabular}
\end{center}
In Case 1, with a single set of parameters, the process is a regulated
Brownian motion with two boundaries, in Case 2 the drift is decreased to $-10$ when the process reaches the upper boundary, and in Case 3
the drift remains the same, but the variance is reduced.

As expected, the buffer content is stochastically smaller in Case 2 than in Case 1:
for instance, the 90th percentiles are 2.88 and 3.44 respectively.  We
had expected that the buffer content would also be smaller in Case 3, our argument being that, with a smaller variance, the
negative drift would be better felt, and the buffer content would go
down faster.
As one sees on Figure \ref{f:cdfcyclic} this is not the case and the
buffer content is slightly larger in Case 3 (the 90th percentile is
3.48).   

We give in the table below the expected total duration of excursions
from level 0 to level $b$ and back from level $b$ to level 0, and also
the proportion of time spent by the process in the regenerative
intervals from 0 to $b$; this quantity is
$\Pi_u(b)= (\vnu^* \vm^*)^{-1} \vnu_{0;u} M_{0;u}(b) \vone$.

\begin{center}
  \begin{tabular}{c|ccc}
 Case &  $\MM_{0;u}(b)$ & $\MM_{b;d}(b)$ & $\Pi_u(b)$ \\
 \hline
    1  & 2.13 & 1.25 & 0.63  \\
    2  & 2.13 & 0.35 & 0.86 \\
    3  & 2.13 & 3.50 & 0.38
  \end{tabular}
\end{center}

Obviously, the time to move from 0 to $b$ is the same in all cases,
and we do observe for Case 2 the effect resulting from switching from
$\mu_u = -1$ to $\mu_d=-10$.  In Case 3, switching from
$\sigma_u^2 = 10$ to $\sigma_d^2 =1$ increases the expected length of
an excursion from
$b$ to 0 nearly by a factor 3.

\end{exemple}

\begin{exemple}   \rm {\bf Cyclic environmental process.}
   \label{ex:cyclic}
In this example, $m=8$ and the process of phases evolves
cyclically from 1 to 8 and back to 1.  We take $Q_u = Q_d = \Omega$ with
$\Omega_{i,i+1} = \lambda$, for $i=1, \ldots, 7$, $\Omega_{8,1}= \lambda$,
$\Omega_{ii}=-\lambda$ for all $i$, 
the other elements are equal to 0. In the three cases to
follow, we have $\lambda = 0.1$, so the process moves from one phase to the next in $10 $ units of time on average. The other parameters are
$\mu_i =-1$ for all $i$, and $\sigma_i= 1$ for all
$i\not=8$ and $\sigma_8=10$.  Thus, the process is quite regular most
of the time but every 80 units of time, on average, the volatility
becomes very high during 10 units of time.

In Case 1 (dotted line on the right-hand side graph of Figure
\ref{f:cdfcyclic}), the buffer is infinite.  One observes the effect of
the irregularity, infrequent but very high, of the input process: the
 stationary expected buffer occupancy is $\E[X_\infty] =
6.69$, but the distribution has a very long tail, with $\P[X_\infty >
20]=0.12$.  Actually, this tail decreases at a rate equal to the
maximal eigenvalue of  $K$, equal to $-0.14$ in the present example.

In Case 2 (dashed line), the buffer is finite, with $b=20$ and the
other parameters are the same as  in Case 1.  In Case 3 (plain line), $\vmu_u$
and $\vsigma_u= \vsigma_d$  are the same as in Case 2, and $\vmu_d = 10
\vmu_u$.   One clearly see that the buffer content is smallest in Case
3,  the 90th percentile, for instance, is 5.40, compared to 8.20 in
Case 2.   

It is interesting to examine in more details the behavior of the two
processes. The transition matrix $H_{0;u}$ is the same in both cases and the probability mass is almost exclusively concentrated
on the $8$th column: the computed values are $(H_{0;u})_{i,8} = 0.9995$ and
$(H_{0;u})_{i,1} = 0.0005$, independently of $i$, the remaining
elements of the matrix being negligible.
 % less than $0.5 \, 10^{-4}$.

\begin{table}[t]
\begin{center}
  \begin{tabular}{l|rrrrrrrr}
 & 1~~ & 2~~ & 3~~ & 4~~ & 5~~ & 6~~ & 7~~ & 8~~ \\
\hline \hline
$\MM_{0;u}$ & 109.20  & 99.20 &  89.20 & 79.20 &  69.20 &  59.20 &  49.20 &  39.24 \\
\hline 
$\MM_{b;d}$ Case 2 &  19.49 &  19.45 &  19.34 &
   19.03 &  18.22 &  16.36 &  12.66 &   6.34 
\\
$\MM_{b;d}$ Case 3 &   2.00 &   2.00 &   2.00&
    2.00 &   2.00 &   1.99 &   1.97 &   1.53 
  \end{tabular}
\end{center}
\caption{
   \label{t:momentsCyclic}
Cyclic environment, moments of first passage time from one
boundary to the other.}
\end{table}

We give in Table~\ref{t:momentsCyclic} the expected duration of
transitions from one boundary to the other.  It appears clearly that
to reach level $b$, starting from a phase $i \not= 8$, the process must
first move to phase $8$, with an expected time equal to $10(8-i)$, and
only then get a chance to reach $b$ in a reasonable amount of time.
Furthermore, $(\MM_{0;u})_8$ is much greater than the expected sojourn
time in phase $8$.  We interpret this as follows: starting from level
0 in phase $8$, there is a significant probability that the process
reaches level $b$ before switching to phase 1, but it is also possible the system will have to go through one cycle (or more) before eventually reaching level $b$.

For Case 2, the effect of $\sigma_8$ is also noticeable in the
expected duration of excursions from $b$ to 0, albeit to a lesser
degree; for Case 3, these expected durations are dominated by the
large absolute value of $\vmu_d$.

\end{exemple}

\begin{exemple}   \rm{\bf Video streaming application.}
    \label{ex:telek}
This example is taken from Gribaudo {\it et al.}~\cite{gmst08}.  We
use it to illustrate changes in the system characteristics resulting
from global changes of the parameters.

There are five states; the video streaming application
cycles between {\em buffering} (States 1 and 3), {\em
  playing} (States 2 and 4), and {\em finishing} (State 5), leaving
each state at the rates $\beta_B$, $\beta_P$ and $\beta_F$, respectively. The videos are being played in a loop: when a video is finished, the application starts another one. Video streaming packets arrive at rate $\lambda_L$ with variance $\gamma_L$
when the network is congested (States 1 and 3), at rate $\lambda_H$ with variance
$\gamma_H$ otherwise (States 2 and 4). The packets are decoded at rate~$\delta$ with variance~$\gamma$. 

The transition matrix for the phase process is 
 \[
 Q = \vligne{
  \ast & \beta_B & \alpha_{LH} & 0 & 0 \\
  0 & \ast & 0 & \alpha_{LH} & \beta_P \\
 \alpha_{HL} & 0 & \ast & \beta_B & 0 \\
 0 & \alpha_{HL} & 0 & \ast & \beta_P \\
 \beta_F p_1 & 0 & \beta_F p_3 & 0 & \ast
 }
 \]
where the diagonal elements are such that $Q \vone = \vzero$, and the
rates and variance vectors are
\begin{align*}
\vmu & = \vligne{\lambda_L & \lambda_L-\delta & \lambda_H & \lambda_H - \delta
                                                       & - \delta},
\\
\vsigma^2 & = \vligne{\gamma_L & \gamma_L+\gamma & \gamma_H &
                                                               \gamma_H+\gamma
                                                       & \gamma}.
\end{align*}

%============================
\begin{figure}[t]
\centering{
\includegraphics[width=0.41\textwidth,height=0.41\textwidth]{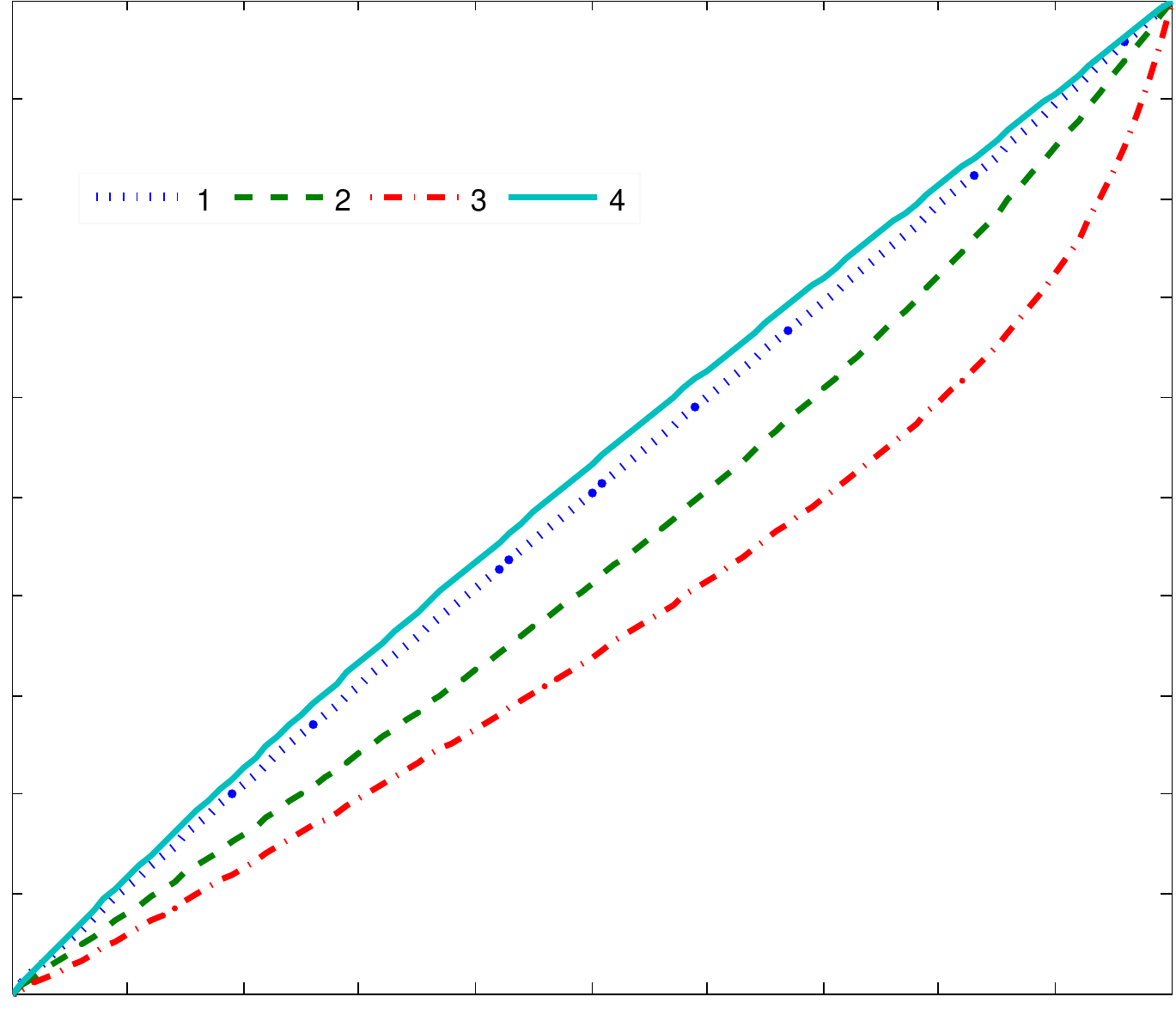} 
\put(0,-4){\makebox(0,0)[c]{$1$}}
\put(-153,-4){\makebox(0,0)[c]{$0$}}
\put(-163,6){\makebox(0,0)[c]{$0$}}
\put(-163,160){\makebox(0,0)[c]{$1$}}

\caption{\label{f:cdftelek}
Cumulative stationary distribution functions for Video Streaming Application}
}
\end{figure}
%============================

The distribution shown as a dotted line on
Figure~\ref{f:cdftelek} is that of the regulated MMBM, with parameters chosen
from~\cite{gmst08}: the buffer size is 1MB and is the unit of
volume, the time unit is 1 second, the parameters values are given in
the table below.
\begin{center}
  \begin{tabular}{lll}
    $\delta = \gamma = 0.5$ &   $\lambda_L = \gamma_L =0.25$ & $\lambda_H =
         \gamma_H =  0.625$ \\  
\hline
$\alpha_{LH} = \alpha_{HL} = 1/60$  & $\beta_B = \beta_F = 0.1$&
$\beta_P=0.03$
    \\
\hline
$p_1= \alpha_{HL} /(\alpha_{HL} + \alpha_{LH})$ & $p_3=1-p_1$
  \end{tabular}
\end{center}
The other curves are defined as follows:
\begin{itemize}
\item[] 
Case 2, dashed line --- the parameters are the same but the
phase transition matrices at regeneration epochs are $P^\circ =
P^\bullet =(1/m) \vone \, \vone^{\top}$; thus, the phase is sampled at random at the
end of each excursion from one boundary to the other.
\item[]
Case 3, mixed dashed and dotted line --- the variances are reduced
during excursions from $b$ to $0$, with $\vsigma_d = 0.1 \vsigma_u$;
all other parameters remain the same and $P^\circ = P^\bullet = I$.
\item[]
Case 4, plain line --- the system cycles 10 times faster through its
three stages during excursions from $b$ to 0, with $(\beta_P)_d =
(\beta_F)_d = 1$, $(\beta_P)_d=0.3$; all other parameters remain the
same and $P^\circ = P^\bullet = I$. 
\end{itemize} 
In addition to the distribution functions in Figure~\ref{f:cdftelek},
we give below the median of the four distributions, and also the
stationary probability $\Pi_u(b)$.  We observe that the buffer is more
heavily utilized when the variances are reduced (Case 3), that is, it spends half of the time being above level $0.7$. We also observe that re-sampling the phases at regeneration epochs (Case 2)
has a significant effect.

\begin{center}
  \begin{tabular}{c|cccc}
    Case &1& 2 &3& 4  \\
    \hline
Median & 0.50 & 0.59 & 0.70 & 0.46 \\
$\Pi_u(b)$ & 0.51 & 0.40 & 0.19 & 0.54
  \end{tabular}
\end{center}

\end{exemple}

%=====================================
\section{Comparison with existing literature}
\label{s:lothar}

A related question is addressed in Breuer~\cite{breue12}, where the
author analyzes the joint distribution of the random variables
\begin{align*}
\zeta_1(b;x,j) & = \int_0^{\delta_0 \wedge \delta_b} \indic\{X(s) < x,
\kappa(s)=j\} \ud s
\intertext{and}
\zeta_2(b;x,j) & = \int_0^{\delta_0 \wedge \delta_b} \indic\{X(s) > x,
\kappa(s)=j\} \ud s
\end{align*}
for $j \in \EE$ and $0 < x < b$.  These are the time spent in
$(0,x)\times \{j\}$ and $(x,b)\times \{j\}$ respectively, before the
first exit from the interval $(0,b)$.

We need to introduce some more notation.  Consider a vector $\vr
\geq \vzero$ indexed by $\EE$.  Define the matrix $U(\vr)$ as the minimal
solution of the matrix equation 
\begin{equation}
   \label{e:Uofr}
\Delta_\sigma^2 X^2 + 2 \Delta_\mu X + 2 (Q - \Delta_r) =0.
\end{equation}
For $\vr = \vzero$, we have $U(\vzero)=U$, the generator introduced at
the beginning of Section \ref{s:fluid}.  Similarly, $-\widehat U(\vr)$
is the maximal solution of (\ref{e:Uofr}).

We further define the random variables 
\[
T_1(b;x) = \sum_{j \in \EE} (\vr_1)_j \zeta_1(b;x,j),
\qquad
T_2(b;x) = \sum_{j \in \EE} (\vr_2)_j \zeta_2(b;x,j),
\]
where $\vr_1$ and $\vr_2$ are two nonnegative vectors.  The functions 
\begin{align*}
E _{\vr_1,\vr_2}^\u(b;x|a) = \E[ e^{-T_1(b;x) -T_2(b;x)} \indic\{\delta_b <
\delta_0\}, \kappa(\delta_b)| X(0)=a, \kappa(0)]
\intertext{and}
E _{\vr_1,\vr_2}^\d(b;x|a) = \E[ e^{-T_1(b;x) -T_2(b;x)} \indic\{\delta_0 <
\delta_b\}, \kappa(\delta_0)| X(0)=a, \kappa(0)],
\end{align*}
for $0 < a < b$, are the joint Laplace transforms of the $\zeta_1$s
and $\zeta_2$s
restricted on the exit occurring at the upper or lower boundary,
respectively, conditionally given that the process starts from level $a$ in the open interval $(0,b)$.

From \cite[Theorem 1, Lemmas 1 and 2]{breue12}, we find after
various adaptations to our specific case  and some simple manipulations that
\begin{align}
%   \label{e:ebu}
E _{\vr_1,\vr_2}^\u(b;x|x) & = -2 (\widehat P_x(\vr_1) + P_{b-x}(\vr_2))^{-1}
L_{b-x}(\vr_2)
\intertext{and, by symmetry, that}
%   \label{e:ebd}
E _{\vr_1,\vr_2}^\d(b;x|x) & = -2 (\widehat P_x(\vr_1) + P_{b-x}(\vr_2))^{-1} \widehat
L_{x}(\vr_1). 
\end{align}
Here, $\widehat P_x(\vr_1)$ and $ \widehat L_{x}(\vr_1)$ are given by
(\ref{e:Lhat}) with $U$, $\widehat U$ and $b$ respectively replaced by
$U(\vr_1)$, $\widehat U(\vr_1)$ and $x$, and $P_{b-x}(\vr_2)$ and
$L_{b-x}(\vr_2)$ are given by (\ref{e:LoPo}) with $U$, $\widehat U$
and $b$ replaced by $U(\vr_2)$, $\widehat U(\vr_2)$ and $b-x$.

% The matrix $E(b;x)$ defined as
% %
% \begin{align}
%   \nonumber
% E(b;x) & = (E^\u(b;x|x) + E^\d(b;x|x))|_{\vr_2=\vzero}  \\
%    \label{e:eb}
%   & = 2((\widehat P_x(\vr_1) + P_{b-x})^{-1} (\widehat L_{x}(\vr_1) + L_{b-x}) 
% \end{align}
% %
% is the matrix of joint Laplace transforms of the $\zeta_1$s, given
% that $X(0)=x$.

Finally, we define the random variables
\[
\xi(y;x,j) = \int_0^{\delta_y} \indic\{Z(s)\leq x, \kappa(s)=j\}  \ud s,
\]
for $y \geq x$, as the time spent in $[0,x] \times \{j\}$ by the {\em
  regulated} process until the first passage to
level $y$, and we denote their joint Laplace transform, starting from
level 0, by
\[
\Xi_{\vr}(y;x) = \E[e^{-\vr \vxi(y;x)} \kappa(\delta_y) | Z(0=0, \kappa(0)].
\] 

\begin{lem}
   \label{t:generating}
The joint Laplace transform $\Xi_{\vr}(y;x)$ of the random variables
$\xi(b;x,j)$, $1 \leq j \leq m$, is given by
\begin{equation}
   \label{e:xi}
\Xi_{\vr} (b;x) = (I - \Xi_{\vr} (x;x) E_{{\vr},\vzero}^\d (b;x|x))^{-1}
\Xi_{\vr} (x;x) E_{{\vr},\vzero}^\u (b;x|x)
\end{equation}
for $b > x$, with  $\Xi_{\vr} (x;x)= (-P_x({\vr}))^{-1} L_x({\vr})$.
\end{lem}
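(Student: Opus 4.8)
The plan is to obtain a renewal-type matrix equation for $\Xi_{\vr}(b;x)$ by cutting the trajectory of the regulated process, on its way from level $0$ to level $b$, at the intermediate level $x$, and then to solve that equation.

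As a preliminary I would settle the side claim $\Xi_{\vr}(x;x) = (-P_x(\vr))^{-1}L_x(\vr)$. Starting from $0$, the regulated process stays in $[0,x)$ throughout $[0,\delta_x)$, so $\indic\{Z(s)\le x\}\equiv 1$ there and $\vr\,\vxi(x;x)=\int_0^{\delta_x}\vr_{\kappa(s)}\,\ud s$; hence $\Xi_{\vr}(x;x)=\E[\exp(-\int_0^{\delta_x}\vr_{\kappa(s)}\,\ud s)\,\kappa(\delta_x)\mid Z(0)=0,\kappa(0)]$, which is a discounted first-passage matrix from $0$ to $x$. By the Feynman--Kac (killing) argument, discounting at the state-dependent rate $\vr_{\kappa(s)}$ is equivalent to replacing the generator $Q$ by $Q-\Delta_r$; since this substitution turns $U,\widehat U$ into $U(\vr),\widehat U(\vr)$ through~(\ref{e:Uofr}), applying Theorem~\ref{t:probaup} with $Q\to Q-\Delta_r$ and $b\to x$ delivers the stated formula.

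The main step is the decomposition. First, the process travels from $0$ up to level $x$; every instant so far lies below $x$, so this leg contributes exactly the factor $\Xi_{\vr}(x;x)$, carrying the transform of the $\vr$-weighted sojourn time together with the phase at $\delta_x$. From level $x\in(0,b)$, and until the process first leaves the open interval $(0,b)$, the regulated process coincides pathwise with the unregulated one (the lower regulator has not yet acted, and the upper boundary is irrelevant before $\delta_b$); on this interior excursion one attaches weight $\vr$ to the time below $x$ and weight $\vzero$ to the time above $x$ (a continuous-state process occupies the single level $x$ for zero time), so the contribution is precisely Breuer's $E_{\vr,\vzero}^{\u}(b;x|x)$ when the excursion exits at $b$ and $E_{\vr,\vzero}^{\d}(b;x|x)$ when it exits at $0$. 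If it exits at $b$, level $b$ has been reached and the journey ends; if it exits at $0$, the regulated process sits at $0$ in the phase recorded by $E_{\vr,\vzero}^{\d}(b;x|x)$, and by the strong Markov property and time-homogeneity the remaining $\vr$-weighted time until $\delta_b$, jointly with $\kappa(\delta_b)$, has transform $\Xi_{\vr}(b;x)$. Conditioning on the intermediate phases turns these cases into the identity
\[
\Xi_{\vr}(b;x) = \Xi_{\vr}(x;x)\,E_{\vr,\vzero}^{\u}(b;x|x) + \Xi_{\vr}(x;x)\,E_{\vr,\vzero}^{\d}(b;x|x)\,\Xi_{\vr}(b;x).
\]
Since $\Xi_{\vr}(x;x)\,E_{\vr,\vzero}^{\d}(b;x|x)$ is substochastic with spectral radius strictly less than $1$ (from $x\in(0,b)$ the unregulated process reaches $b$ before $0$ with positive probability, and any discounting only helps), the matrix $I-\Xi_{\vr}(x;x)\,E_{\vr,\vzero}^{\d}(b;x|x)$ is nonsingular and rearranging gives~(\ref{e:xi}).

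I expect the delicate point to be the decomposition itself: one has to check that the weights land on the right regions (weight $\vr$ on $[0,x)$, weight $\vzero$ on $(x,b)$), so that $E_{\vr,\vzero}^{\u}$ and $E_{\vr,\vzero}^{\d}$ really do encode a single interior excursion, and that after an excursion exits at $0$ the continuation restarts, in law, as $\Xi_{\vr}(b;x)$. Both are consequences of the strong Markov property once the pathwise picture is laid out, together with the pathwise coincidence of the regulated and unregulated processes up to the first visit to level $0$.
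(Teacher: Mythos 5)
Your proof is correct and follows essentially the same route as the paper: the identical three-piece decomposition of $[0,\delta_b]$ at $\delta_x$ and at the first subsequent exit from $(0,b)$, yielding the same renewal equation $\Xi_{\vr}(b;x)=\Xi_{\vr}(x;x)\bigl(E_{\vr,\vzero}^\u(b;x|x)+E_{\vr,\vzero}^\d(b;x|x)\,\Xi_{\vr}(b;x)\bigr)$. The only divergence is in the side formula $\Xi_{\vr}(x;x)=(-P_x(\vr))^{-1}L_x(\vr)$, which the paper obtains by citing and adapting Breuer's Theorem~1 for the level-reversed process, whereas you derive it self-containedly from Theorem~\ref{t:probaup} via the exponential-killing interpretation of the discount rates (replacing $Q$ by $Q-\Delta_r$, hence $U,\widehat U$ by $U(\vr),\widehat U(\vr)$); you also record why $I-\Xi_{\vr}(x;x)E_{\vr,\vzero}^\d(b;x|x)$ is invertible, a point the paper leaves implicit.
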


\begin{proof}
We decompose the interval $[0, \delta_y]$ into three subintervals: 
\[
[0, \delta_y] = [0, \delta_x] \cup [\delta_x, \delta^*] \cup
[\delta^*, \delta_b],
\] 
where $\delta^* = \inf\{t > \delta_x : Z(t)=0 \
\mathrm{or} \ Z(t)=b\}$.  The sojourn times in $(0,x)\times\{j\}$
during these intervals are conditionally independent, given the
phases, and so we have 
\[
\Xi_{\vr} (b;x) = \Xi_{\vr} (x;x) (E_{{\vr},\vzero}^\u (b;x|x) +
E_{{\vr},\vzero}^\d (b;x|x) \Xi_{\vr} (b;x)),
\]
from which (\ref{e:xi}) follows.

The given expression for $\Xi_{\vr}(x;x)$ is a consequence of
\cite[Theorem 1]{breue12b}: we adapt it to our specific case, taking into account the fact that Theorem~1 in \cite{breue12b} is stated for the
level-reversed process, and performing some simple manipulations.
\end{proof}

At first, it looks like we might obtain $\MM_0(x)$ by differentiating
both sides of (\ref{e:xi}) with respect to $\vr$ and by evaluating the
result at $\vr = \vzero$.  We would need the derivatives with respect
to $\vr$ of the solutions of (\ref{e:Uofr}).  Details would still need
to be worked out and in final analysis, the expressions so obtained
would without doubt be much more involved than the very
clean expressions given in~(\ref{e:mofxb}).

%======================================
\section{Conclusion and extensions}
\label{s:conclusion}

In this paper, we have illustrated one useful reason for approximating
Markov-modulated Brownian motions with stochastic fluid queues. In
particular, the approximation allows for the analysis of MMBMs subject
to boundary conditions that are not the traditional regulation. This
approach, coupled with the regenerative method, may be adapted easily
to other types of feedbacks, such as a combination of absorption,
stickiness, and instantaneous change of phase whenever the process
hits a boundary. 

With the technique developed here, we might analyze systems for which
the so-called feedback only lasts for a finite amount of time.  For
instance, rates change for an exponential amount of time, and then
the system resumes its normal mode of operations.  The results from
Sections \ref{s:proba} to \ref{s:sd} have to be adapted, as the
generator for the phase process between two regeneration points is no
longer irreducible.  

%poof!  Goes away!

\appendix

\section{Expected time under a taboo}
   \label{s:gamma0}

   Although the statement of Theorem \ref{theo:GGhat} would seem to be
   well-known, we do not know of a published formal proof, which is
   why we include it here.  Consider a fluid queue with generator $T$
   for the environmental Markov process and fluid growth rates $\vcc$.
   We denote by $\Psi$ its matrix of first return probabilities to
   level 0, and we define
   $K = C_\u^{-1} T_{\u\u} + \Psi |C_\d|^{-1} T_{\u\d}$, where
   $C = \Delta_c$.

\begin{theo} 
		\label{theo:GGhat}
The matrix $\Gamma(x)$ of mean sojourn time in $[0,x]$ before return
to the initial level 0, starting from a phase with positive growth, is
given by 
	\begin{align} 
		\label{eqn:G} 
	\Gamma(x) & = \FF(K;x) \vligne{ C_\u^{-1} & \Psi |C_\d|^{-1}},
	\end{align} 
where $\FF(K;x)= \int_0^x e^{Ku} \ud u$.
\end{theo}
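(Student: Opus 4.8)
The plan is to reduce $\Gamma(x)$ to a one-dimensional integral over the level variable by an occupation-time (co-area) identity, and then to evaluate the integrand through a crossing-count argument. Fix the initial state $s\in\s_\u$ at level $0$ and write $\tau$ for the first return of $\{X(t)\}$ to level $0$; let $\E_s$ denote expectation conditional on $X(0)=0$ and initial phase $s$. On $[0,\tau)$ the path is continuous and piecewise linear with slope $c_{\beta(t),\kappa(t)}$, and almost surely it crosses each level $u>0$ only finitely often. Since the level strictly increases on every maximal time-interval with phase $(1,j)\in\s_\u$ and strictly decreases on every one with phase $(2,j)\in\s_\d$, on each such interval the level-range straddles a given level $u$ at most once; applying the occupation-time formula piece by piece, each upcrossing of $u$ in phase $(1,j)$ contributes $\ud u/c_{1j}$ to the time accumulated near level $u$, and each downcrossing of $u$ in phase $(2,j)$ contributes $\ud u/|c_{2j}|$. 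Integrating over $u$ yields, for every $(k,j)\in\s$,
\[
(\Gamma(x))_{s,(k,j)}=\int_0^x |c_{kj}|^{-1}\,\E_s[\,N_{(k,j)}(u)\,]\,\ud u,
\]
where $N_{(k,j)}(u)$ is the number of crossings of level $u$ before $\tau$ that occur in phase $(k,j)$ — upward if $k=1$, downward if $k=2$.

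Next I would identify the two families of expected crossing counts. For an up-phase $(1,j)$, Ramaswami's taboo interpretation of $e^{Ku}$~\cite{ram99} states that the expected number of upcrossings of level $u$ in phase $(1,j)$ before the first return to level $0$ equals $(e^{Ku})_{s,(1,j)}$. For a down-phase $(2,j)$, I would use that the successive crossings of a fixed level $u$ alternate, starting with an upcrossing: by the strong Markov property and the spatial homogeneity of the free fluid, the phase at the downcrossing immediately following an upcrossing in phase $(1,i)$ is $(2,j)$ with probability $\Psi_{(1,i)(2,j)}$ independently of the past, while with probability $1-(\Psi\vone)_{(1,i)}$ no further downcrossing occurs — the latter covering the transient case $\valpha\vmu>0$. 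Conditioning on the number and phases of the upcrossings of $u$ then gives $\E_s[N_{(2,j)}(u)]=(e^{Ku}\Psi)_{s,(2,j)}$. Substituting both identities into the display above, reading it blockwise over $\s_\u$ and $\s_\d$, and using $\int_0^x e^{Ku}\,\ud u=\FF(K;x)$ gives
\[
\Gamma(x)=\vligne{\FF(K;x)\,C_\u^{-1} & \FF(K;x)\,\Psi\,|C_\d|^{-1}}=\FF(K;x)\,\vligne{C_\u^{-1} & \Psi\,|C_\d|^{-1}},
\]
which is exactly~(\ref{eqn:G}); here $\FF(K;x)$ is finite because the eigenvalues of $K$ lie in the closed left half-plane, so $e^{Ku}$ is bounded, and $\vligne{C_\u^{-1} & \Psi|C_\d|^{-1}}$ is bounded since $\Psi$ is substochastic.

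I expect the main obstacle to be the rigorous justification of the occupation-time identity in the first paragraph: one must verify that, both almost surely and in expectation, the trajectory on $[0,\tau)$ has only finitely many crossings of each level $u$ (which follows from the non-explosiveness of the finite-state phase process together with the finiteness of $\E_s[N_{(k,j)}(u)]$ established above), and that the pathwise change of variables
\[
\int_0^{\tau}\indic\{X(s)\in[0,x],\ \beta(s)=k,\ \kappa(s)=j\}\,\ud s=\int_0^x |c_{kj}|^{-1}\,N_{(k,j)}(u)\,\ud u
\]
holds identically. The only other ingredient to pin down carefully is a self-contained statement, or a short re-derivation, of the upcrossing interpretation of $e^{Ku}$, on which the remainder of the argument rests.
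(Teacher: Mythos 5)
Your argument is correct in outline, but it takes a genuinely different route from the paper's. The paper proves Theorem~\ref{theo:GGhat} analytically: it introduces the complementary taboo probabilities $G(x,t)$, derives the transport equation $\partial_t G + \partial_x G\,C = G\,T$, integrates over $t$ to obtain an ODE for $\overline\Gamma(x)=\int_0^\infty G(x,u)\ud u$, solves it as $-K^{-1}e^{Kx}\vligne{C_\u^{-1} & \Psi|C_\d|^{-1}}$ after fixing the unknown constant by a first-order expansion of $\overline\Gamma_{\u\u}$ at $x=0$, and then treats the nonnegative-drift case separately (where $\overline\Gamma(x)=\infty$) by killing the process at an exponential rate $\psi$ and letting $\psi\to 0$ through a spectral decomposition of $K_\psi$. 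You replace all of this by the occupation-time (co-area) identity together with the crossing-count identifications $\E_s[N_{(1,j)}(u)]=(e^{Ku})_{s,(1,j)}$ and $\E_s[N_{(2,j)}(u)]=(e^{Ku}\Psi)_{s,(2,j)}$; the Wald-type conditioning you use for the downcrossings is sound (each downcrossing of $u$ is the first return to $u$ from above following the preceding upcrossing, and the taboo of level $0$ does not interfere with that excursion), and your route has the real advantage of treating all drift cases uniformly, since $\int_0^x e^{Ku}\ud u$ is finite for finite $x$ whether or not $K$ is singular, so no killing and limiting argument is needed. What it buys less of is self-containedness: the upcrossing interpretation of $e^{Ku}$ under taboo of level $0$ is precisely the ``simple justification'' the paper itself mentions right after~(\ref{e:gamma0}) and then declines to rest on --- the appendix exists because the authors state they know of no published formal proof. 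Your proof is therefore only as rigorous as that cited interpretation, and to make it stand alone you would need to carry out the re-derivation you flag in your closing paragraph (for instance by showing that the matrix $V(u)$ of expected taboo upcrossing counts satisfies $V'(u)=V(u)K$ with $V(0)=I$, which essentially re-imports the analytic argument). You identify this dependency yourself, which is the right instinct.
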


\begin{proof} 

We need to consider separately the case when the stationary drift is
strictly negative from the case when it is positive or equal to zero.

\paragraph{A. Strictly negative drift}
\label{sec:a}

In this case, the fluid queue is positive recurrent and the
eigenvalues of $K$ are all in $\C_{<0}$.

We define the complementary probability functions 
	\begin{align*}
	[G(x,t)]_{i j} & = \P[t < \tau, {X}(t) > x, \kappa(t) = j \, |\, {X}(0) = 0, \kappa(0) = i],  
	\end{align*}
	where $\tau$ is the first return time to level $0$. Denote by $\overline{\Gamma}(x)$ the mean sojourn time in $(x,\infty)$ before returning to the initial level $0$: 
	\begin{align*}
	\overline{\Gamma}(x) = \int_0^{\infty} G(x,u) \ud u,
	\end{align*} 
obviously, $\Gamma(x) = \overline{\Gamma}(0) - \overline{\Gamma}(x)$.
 One verifies by the usual argument (Karandikar and
 Kulkarni~\cite{kk95})  that $G(x,t)$ is the solution to the system of partial differential equations 
	\begin{align}
	\frac{\partial}{\partial t} G(x,t) +  \frac{\partial}{\partial x} G(x,t) C & = G(x,t) T \quad \mbox{ for } x > 0.  \nonumber
%	\end{align} 
	\intertext{Integrating both sides with respect to $t$ from $0$ to $\infty$ gives} 
	% 
%	\begin{align} 
	\label{eqn:pde}
[G(x,t)]_0^\infty + \frac{\partial}{\partial x} \overline \Gamma(x) C & = \overline \Gamma(x) T.
	\end{align} 
	Note that 
	\begin{align*} 
		\lim_{t \rightarrow \infty} G(x,t) =0, \qquad \lim_{t
  \rightarrow 0} G(x,t) = 0, 
	\end{align*} 
	the first limit is due to the negative drift assumption, which
        implies that $\tau < \infty$ almost surely, the second holds
        because the fluid queue does not have enough time to grow
        beyond~$x$ by time $t$ if $t$ is small. Then, it is easy to
        verify that the solution to~\eqref{eqn:pde} is given by
\begin{equation}
%   \label{e:lbarone}
\overline \Gamma(x) = A K^{-1} e^{Kx} \vligne{C_\u^{-1}  &  \Psi |C_\d|^{-1}}
\end{equation}
for some matrix $A$ to be determined.

Let us focus on the block $\overline \Gamma_{\u\u} (x) = A K^{-1}
e^{Kx} C_\u^{-1}$.  For sufficiently small
$h$, we may write that
\begin{equation}
   \label{e:Bofx}
\overline \Gamma_{\u\u}(0) = h C_\u^{-1} + \overline \Gamma_{\u\u}(h) + o(h).
\end{equation}
Indeed, the expected time spent above level 0 in a phase $j$ is equal
to the time needed to reach level $h$ in that phase, if $\kappa(0)=j$
plus the time spent above $h$ in that phase.  The third term in
(\ref{e:Bofx}) accounts for the time spent in oscillations between 0
and $h$ whenever the fluid drops below $h$.  We get from
(\ref{e:Bofx}) that
	\begin{align*} 
\frac{\partial}{\partial x} \overline \Gamma_{\u\u}(x)|_{x=0} = -C_\u^{-1},
	\end{align*} 
from which we conclude that $A= -I$.  Thus,
\begin{align*}
\Gamma(x) & = (-K)^{-1} (I- e^{Kx}) \vligne{ C_\u^{-1} & \Psi
  |C_\d|^{-1}} \\ 
  & = \FF(K;x) \vligne{ C_\u^{-1} & \Psi   |C_\d|^{-1}} \qquad \qquad
  \mbox{by Lemma \ref{t:F}.} 
\end{align*}

\paragraph{B. Nonnegative drift}
\label{sec:b}

In this case, the fluid queue is null-recurrent or transient, and $K$
has one eigenvalue equal to zero.

We may not repeat the argument for Part \ref{sec:a}  because the mean
sojourn time in $(x,\infty)$ is infinite, and $\int_0^{\infty}
G(x,u)\ud u$ diverges for any $x$.  
		
To get around this problem, we shall kill the process after a random,
finite, interval of time and then use a limiting argument. We define
$\zeta$ to be an exponentially distributed random variable with rate
$\psi$, and
		\begin{align*} 
	G'(x,t; \psi) & = \P[t < \min\{\tau, \zeta\}, {X}(t) > x, \kappa(t)  \; |\; {X}(0) = 0, \kappa(0) ], \\
					\Gamma'(x;\psi) & = \int_0^{\infty} G'(x,u;\psi) \ud u. 
		\end{align*} 
		
As $\zeta < \infty$ with probability one, $\Gamma'(x;\psi) < \infty$
and we may retrace the steps in Part \ref{sec:a}. In particular, the
matrix $\Gamma'(x;\psi)$ of expected sojourn time in $(x,\infty)$
under the taboo of $0$ is given by  
\begin{align*}
\Gamma'(x;\psi) = (-{K}_\psi)^{-1}e^{{K}_{\psi} x} \vligne{ C_\u^{-1} & \Psi_\psi |C_\d|^{-1}},
\end{align*}  
where ${\Psi}_{\psi}$ is the minimal nonnegative solution of the Riccati equation 
		\begin{align*}
			|C_\d^{-1}|T^{\psi}_{-+} + {\Psi}_{\psi}C_\u^{-1}T^{\psi}_{++} + |C_\d^{-1}|T^{\psi}_{--} {\Psi}_{\psi} + {\Psi}_{\psi} C_\u^{-1}T^{\psi}_{+ -} {\Psi}_{\psi} = 0, 
		\end{align*} 
		with 
		\begin{align*} 
		T^{\psi} = \left[\begin{array}{cc} T_{++} - \psi I & T_{+-} \\ 
		T_{-+}  & T_{--} - \psi I \end{array} \right],
		\end{align*}
		and 
		\begin{align*}
		{K}_{\psi} & =  |C_\d^{-1}|T_{--}^{\psi} + {\Psi}_{\psi}C_\u^{-1}T_{+-}^{\psi}.
		\end{align*}
In the limit as $\psi \rightarrow 0$, the matrices $\Psi_{\psi}$ and ${K}_{\psi}$ respectively converge to ${\Psi}$ and ${K}$.
Then, 
\begin{align*} 
{\Gamma}(x) & = \lim_{\psi \rightarrow 0} (\Gamma'(0;
                \psi) - \Gamma'(x;\psi)) \\ 
 & = \lim_{\psi \rightarrow 0} \left \{(-{K}_{\psi})^{-1} (I -
   e^{{K}_{\psi}b}) \right\}  \vligne{C_\u^{-1} & \Psi |C_\d|^{-1}}. 
		\end{align*}
In the last expression, ${K}_{\psi}$ converges to ${K}$ which is singular; thus, we need to exercise some care in evaluating the remaining limit. 
		
The matrix ${K}$ has one isolated eigenvalue equal to $0$, and ${K}_{\psi}$ has an isolated, real, maximal eigenvalue $\omega_{\psi}$ which converges to $0$ as $\psi \rightarrow 0$. Thus, there exist some matrices $S$ and $S_{\psi}$ such that 
\begin{align*} 
{K} = S \left[\begin{array}{cc} J & \\ & 0 \end{array}\right] S^{-1} \quad \mbox{ and } \quad {K}_{\psi} = S_{\psi} \left[\begin{array}{cc} J_{\psi} & \\ & \omega_{\psi} \end{array} \right] S_{\psi}^{-1},
		\end{align*}
where $J$ is a matrix with all eigenvalues of ${K}$ in $\C_{<0}$,
$J_{\psi}$ is  a matrix with all eigenvalues of ${K}_{\psi}$ with real
parts strictly less than $\omega_\psi$, and $S_{\psi} \rightarrow S$ and $J_{\psi} \rightarrow J$ as $\psi \rightarrow 0$. We decompose the inverse of ${K}_{\psi}$ as 
		\begin{align}
			\label{eqn:kstarinv} 
		{K}_{\psi}^{-1} = S_{\psi} \left[\begin{array}{cc} J_{\psi}^{-1} & \\ & 0 \end{array}\right] S_{\psi}^{-1} + S_{\psi} \left[\begin{array}{cc} 0 & \\ & \omega_{\psi}^{-1}\end{array}\right]S_{\psi}^{-1}. 
		\end{align}
		Note that the first term on the right side of~\eqref{eqn:kstarinv} converges to the group inverse ${K}^{\#}$ as $\psi \rightarrow 0$ \cite[Theorem 7.2.1]{cm91}. Also, 
		\begin{align*}
		\lim_{ \psi \rightarrow 0} e^{{K}_{\psi} x} & = \lim_{\psi \rightarrow 0} S_{\psi}\left[\begin{array}{cc} e^{J_{\psi} x} & \\ & e^{\omega_{\psi}x} \end{array}\right] S_{\psi}^{-1} = S\left[\begin{array}{cc} e^{Jx} & \\ & 1 \end{array}\right]S^{-1}  = e^{{K}x}.
		\end{align*} 
Thus,
\begin{align*}
 \lim_{\psi \rightarrow 0} & (-{K}_{\psi})^{-1} (I - e^{{K}_{\psi}x})
 \\
& = \lim_{\psi \rightarrow 0}(-{K}_{\psi})^{-1}S_{\psi}
\left[\begin{array}{cc} I - e^{J_{\psi}x} &\\ & 1 - e^{\omega_{\psi}
      x}  \end{array}\right]S_{\psi}^{-1} 
\\
& = -{K}^{\#}(I - e^{{K}x}) - \lim_{\psi \rightarrow
                   0} S_{\psi}\left[\begin{array}{cc} 0 & \\ &
                     \omega^{-1}_{\psi} (1-e^{\omega_\phi
                       x)}\end{array}\right] S_{\psi}^{-1} 
\\
& = -{K}^{\#}(I- e^{{K}x}) + x \boldsymbol{v} \boldsymbol{u},
		\end{align*} 
		where $\boldsymbol{v}$ and $\boldsymbol{u}$ are
                respectively the right and left eigenvectors of ${K}$
                for the eigenvalue $0$.   By Lemma \ref{t:F} again, this completes the proof of~\eqref{eqn:G}.
			\end{proof}

%=====%%%========================

\subsubsection*{Acknowledgements}
The authors thank the Minist\`ere de la Communaut\'e fran\c{c}aise de
Belgique for supporting this research through the ARC grant
AUWB-08/13--ULB~5, they acknowledge the financial support of the
Australian Research Council through the Discovery Grant
DP110101663. Giang Nguyen also acknowledges the support of ACEMS (ARC Centre
of Excellence for Mathematical and Statistical Frontiers).

\bibliographystyle{abbrv}
\bibliography{twoBfluid}	

\begin{thebibliography}{10}

\bibitem{bo08}
N.~G. Bean and M.~M. O'Reilly.
\newblock Performance measures of a multi-layer {Markovian} fluid model.
\newblock {\em Annals of Operations Research}, 160:99--120, 2008.

\bibitem{bot09}
N.~G. Bean, M.~M. O'Reilly, and P.~Taylor.
\newblock Hitting probabilities and hitting times for stochastic fluid flows.
  {The} bounded model.
\newblock {\em Probability in the Engineering and Informational Sciences},
  23:121--147, 2009.
\newblock doi:10.1017/S0269964809000102.

\bibitem{breue12b}
L.~Breuer.
\newblock Exit problems for reflected {Markov}-modulated {Brownian} motion.
\newblock {\em J. Appl. Probab.}, 49:697--709, 2012.

\bibitem{breue12}
L.~Breuer.
\newblock Occupation times for {Markov-modulated} {Brownian} motion.
\newblock {\em Journal of Applied Probability}, 49:549--565, 2012.

\bibitem{cm91}
S.~L. Campbell and C.~D. Meyer.
\newblock {\em Generalized Inverses of Linear Transformations}.
\newblock Dover Publications, New York, 1991.
\newblock Republication.

\bibitem{cinla75}
E.~\c{C}inlar.
\newblock {\em Introduction to Stochastic Processes}.
\newblock Prentice--Hall, Englewood Cliffs, NJ, 1975.

\bibitem{cv02}
P.~Coolen-Schrijner and E.~A. van Doorn.
\newblock The deviation matrix of a continuous-time {Markov} chain.
\newblock {\em Probability in the Engineering and Informational Sciences},
  16:351--366, 2002.

\bibitem{dssl05}
A.~da~Silva~Soares and G.~Latouche.
\newblock A matrix-analytic approach to fluid queues with feedback control.
\newblock {\em International Journal of Simulation. Systems, Science and
  Technology}, 6:4--12, 2005.

\bibitem{dssl03}
A.~da~Silva~Soares and G.~Latouche.
\newblock Matrix-analytic methods for fluid queues with finite buffers.
\newblock {\em Performance Evaluation}, 63:295--314, 2006.

\bibitem{dssl07}
A.~da~Silva~Soares and G.~Latouche.
\newblock Fluid queues with level dependent evolution.
\newblock {\em European J. Oper. Res.}, 196:1041--1048, 2009.

\bibitem{aikm10b}
B.~D'Auria, J.~Ivanovs, O.~Kella, and M.~Mandjes.
\newblock First passage process of a {M}arkov additive process with
  applications to reflection problems.
\newblock {\em ArXiv e-prints}, arXiv:1006.2965v1 [math.PR], 2010.

\bibitem{aikm12}
B.~D'Auria, J.~Ivanovs, O.~Kella, and M.~Mandjes.
\newblock Two-sided reflection of {M}arkov-modulated {B}rownian motion.
\newblock {\em Stochastic Models}, 28:316--332, 2012.
\newblock doi:10.1080/15326349.2012.672285.

\bibitem{gmst08}
M.~Gribaudo, D.~Manini, B.~Sericola, and M.~Telek.
\newblock Second order fluid models with general boundary behaviour.
\newblock {\em Annals of Operations Research}, 160:69--82, 2008.

\bibitem{ivano10}
J.~Ivanovs.
\newblock Markov-modulated {Brownian} motion with two reflecting barriers.
\newblock {\em J. Appl. Probab.}, 47:1034--1047, 2010.

\bibitem{kk95}
R.~L. Karandikar and V.~Kulkarni.
\newblock Second-order fluid flow models: {R}eflected {B}rownian motion in a
  random environment.
\newblock {\em Oper. Res}, 43:77--88, 1995.

\bibitem{ln14}
G.~Latouche and G.~T. Nguyen.
\newblock Fluid approach to two-sided {Markov}-modulated {Brownian} motion.
\newblock {\em Queueing Systems}, 80:105--125, 2015.
\newblock \\ doi: 10.1007/s11134-014-9432-8.

\bibitem{ln13}
G.~Latouche and G.~T. Nguyen.
\newblock The morphing of fluid queues into {Markov}-modulated {Brownian}
  motion.
\newblock {\em Stochastic Systems}, 5:62--86, 2015.
\newblock \\ doi: 10.1214/13-SSY133.

\bibitem{ln15}
G.~Latouche and G.~T. Nguyen.
\newblock Slowing time: {Markov}-modulated {Brownian} motion with a sticky
  boundary.
\newblock {\em Submitted}, 2015.
\newblock arXiv:1508.00922.

\bibitem{lt09}
G.~Latouche and P.~Taylor.
\newblock A stochastic fluid model for an ad hoc mobile network.
\newblock {\em Queueing Systems}, 63:109--129, 2009.

\bibitem{ram99}
V.~Ramaswami.
\newblock Matrix analytic methods for stochastic fluid flows.
\newblock In D.~Smith and P.~Hey, editors, {\em Teletraffic Engineering in a
  Competitive World (Proceedings of the 16th International Teletraffic
  Congress)}, pages 1019--1030. Elsevier Science B.V., Edinburgh, UK, 1999.

\bibitem{roger94}
L.~C.~G. Rogers.
\newblock Fluid models in queueing theory and {Wiener-Hopf} factorization of
  {Markov} chains.
\newblock {\em Ann. Appl. Probab.}, 4:390--413, 1994.

\end{thebibliography}

\end{document}